\theoremstyle{plain} 
\newtheorem{lemma}[equation]{Lemma} 
\newtheorem{theorem}[equation]{Theorem} 
\newtheorem{corollary}[equation]{Corollary} 
\newtheorem{priorResults}{Theorem}
\theoremstyle{definition}
\theoremstyle{remark}
\numberwithin{equation}{section}
\title[Weak Type Estimates] {Borderline Weak Type Estimates for  Singular Integrals and Square Functions}
 \subjclass[2000]{Primary: 42B20 Secondary: 42B25}
\keywords{Calder\'on-Zygmund operators, maximal operators, Muckenhoupt-Wheeden inequality}
\author[C. Domingo-Salazar]{Carlos Domingo-Salazar}
\address{Departament de Matem\`atica Aplicada i~An\`alisi, Universitat de Barcelona, 08071 Barcelona, Spain.} \email{domingo@ub.edu}
\thanks{The first author was supported by grants MTM2013-40985-P and 2014SGR289}
\author[M. Lacey]{Michael Lacey}   
\address{ School of Mathematics, Georgia Institute of Technology, Atlanta GA 30332, USA}
\email {lacey@math.gatech.edu}
\thanks{The second author was supported in part by grant NSF-DMS 1265570. }
\author[M. Lacey]{Guillermo Rey}   
\address{Department of Mathematics, Michigan State University, East Lansing USA}
\email{reyguill@math.msu.edu}
\begin{document}

\begin{abstract}
For any Calder\'on-Zygmund operator $ T$, any weight $ w$, and $ \alpha >1$, the operator $ T$ is bounded 
as a map from $ L ^{1} (M _{ L \log\log L  (\log\log\log L) ^{\alpha } } w )$ into weak-$L^1(w)$.  
The interest in questions of this type goes back to the beginnings of the weighted theory, 
with prior results, due to  Coifman-Fefferman, P\'erez,  and Hyt\"onen-P\'erez, on the $ L (\log L) ^{\epsilon }$ scale. 
Also, for square functions $ S f$, and weights $ w \in A_p$, the norm of $ S$ from $ L ^p (w)$ to weak-$L^p (w)$, $ 2\leq p < \infty $,   is 
bounded by $ [w] _{A_p}^{1/2} (1+\log [w] _{A_ \infty }) ^{1/2}  $, which is a sharp estimate.  
\end{abstract}

	\maketitle  
	
\section{Introduction and main results} 

We are interested in two different weak-type estimates for singular integrals and square functions, 
at critical endpoint cases.  The first of these is the weak $ L ^{1}$-endpoint for singular integrals. 
In \cite{MR0284802}, C. Fefferman and E. M. Stein used  the estimate  
\begin{equation}\label{e:FS}
\sup_{\lambda>0} \lambda w(Mf>\lambda)\leq C\int_{\mathbb R^d} |f(x)|\,Mw(x)dx,
\end{equation}
where $w$ is a general weight on $\mathbb R^d$ and $ M$ is the  Hardy-Littlewood maximal operator, 
to study  vector-valued inequalities for $ M$.  Indeed, the connection between weighted inequalities 
and their vector valued extensions was intensively studied in the next decades (see 
for instance \cites{MR736248,MR0133653}).  
Rubio de Francia's extrapolation \cites{MR745140,MR663793} shows that from the inequality above, one can 
obtain all the $ A_p$ inequalities for the maximal function. See \cite{MR2797562} for a recent accounting of that 
theory, especially \cite{MR2797562}*{Cor 3.10} being relevant to the extrapolation results that follow from our main theorem. 
A  conjecture of B.~Muckenhoupt and R.~Wheeden \cite{problems}*{p. 134}  claimed that the inequality above should be true with the maximal function on the left-hand side replaced by a  singular integral operator. This conjecture was disproved by Reguera \cite{MR2799801} and Reguera-Thiele \cite{MR2923171}. Also see \cite{13125255}.  

The focus here is on the positive direction.  Inequality \eqref{e:FS} for singular integral operators is true if $ Mw$ is replaced by a larger maximal function.   
For instance, one can replace $ M w $ by $(M w ^{r}) ^{1/r}$, for exponents $ 1< r < \infty $, first proved by 
Coifman-Fefferman \cite{MR0358205}, from which you can derive the result of Benedeck-Calder\'on-Panzone \cite{MR0133653}.  
Finer variants can be had by considering Orlicz versions of the maximal function.  
In 1994, 
C. P\'erez \cite{MR1260114} already established a version of the inequality with 
$ (M w ^{r}) ^{1/r}$ replaced by $ M _{L (\log L) ^{\epsilon }} w$, with $0<\epsilon<1$.  Closely related to this is the 
sharp control of a Calder\'on-Zygmund operator on $ L ^{p} (w)$, relative to the $ A_1$ constant of the 
weight, a theme of Fefferman-Pipher \cite{MR1439553}. 
This matter was also pursued in,
for instance, \cites {MR2427454,MR2480568}.  Recently, T. Hyt\"onen and C. P\'erez  quantified the estimates  of Coifman-Fefferman  \cite{MR0358205}  and P\'erez \cite{MR1260114}.  

\begin{priorResults}[Hyt\"onen-P\'erez, \cite{MR3327006}]\label{t:HP} For all $ 0 < \epsilon < 1$ and Calder\'on-Zygmund operator $ T$, 
\begin{equation}  \label{e:eps}
\sup _{\lambda>0 } \lambda w ( T^\ast  f > \lambda ) \lesssim \frac 1 \epsilon \int_{\mathbb R^d} \lvert  f(x)\rvert \, M _{L (\log L) ^{\epsilon }} w(x) dx.  
\end{equation} 
In particular, for every $1<r<\infty$, 
\begin{equation}  \label{e:Mr}
\sup _{\lambda>0 } \lambda w ( T^\ast  f > \lambda ) \lesssim (1+\log r') \int_{\mathbb R^d} \lvert  f(x)\rvert \, M _r w(x) dx,  
\end{equation} 
where $M_r w:= (Mw^r)^{1/r}$, and if $ w $ is an $ A_1$ weight, there holds 
\begin{equation}\label{e:logA}
\sup _{\lambda>0 } \lambda w ( T^\ast  f > \lambda ) \leq  C[w]_{A_1}\log  (1 + [w] _{A_ \infty }) \int_{\mathbb R^d} \lvert  f(x)\rvert \,w(x) \,dx.  
\end{equation}
\end{priorResults}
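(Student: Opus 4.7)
The plan is to follow P\'erez's strategy from \cite{MR1260114}, tracking constants sharply. Given $\lambda > 0$, apply the Calder\'on-Zygmund decomposition to $f$, writing $f = g + b$ where $b = \sum_j b_j$ is supported on disjoint dyadic cubes $\{Q_j\}$ with $\langle |f|\rangle_{Q_j} \sim \lambda$ and $\int b_j = 0$, while $\|g\|_\infty \lesssim \lambda$ and $\|g\|_1 \leq \|f\|_1$. Set $E = \bigcup_j 3Q_j$. Then
\[
w\{T^\ast f > \lambda\} \leq w\{|T^\ast g| > \lambda/2\} + w(E) + w\{x \notin E : |T^\ast b(x)| > \lambda/2\}.
\]

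The first two terms are routine. The Fefferman-Stein weighted $L^2$ bound for $T^\ast$, together with $\|g\|_2^2 \lesssim \lambda \|f\|_1$, gives $\lambda w\{|T^\ast g| > \lambda/2\} \lesssim \int |f|\,Mw$. For the exceptional set, the Fefferman-Stein weak-type inequality \eqref{e:FS} yields $\lambda w(E) \lesssim \int |f|\,Mw$. Both contributions are absorbed into the right-hand side of \eqref{e:eps} since $Mw \lesssim M_{L(\log L)^\epsilon} w$ pointwise.

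The core of the proof is the third term. By Chebyshev and the cancellation of each $b_j$, one expands $\int_{E^c} |T^\ast b_j(x)|\,w(x)\,dx$ as a sum over dyadic annuli $A_k = 2^{k+1}Q_j \setminus 2^k Q_j$, $k \geq 1$. The smoothness estimate on the Calder\'on-Zygmund kernel yields, on each annulus, a factor of $2^{-k\delta}$ times an average of $|b_j|$ against $w$ over $2^{k+1}Q_j$. Applying a generalized H\"older inequality for the Young pair associated to $\Phi(t) = t(\log(e+t))^\epsilon$ —whose Young conjugate $\bar{\Phi}$ grows roughly like $\exp(t^{1/\epsilon})$— one factors this integral into $\|b_j\|_{\bar{\Phi}, 2^{k+1}Q_j}$ (absorbed by $\langle |f|\rangle_{Q_j}\sim\lambda$ using the $L\log L$-type estimates on $b_j$) times $\|w\|_{\Phi, 2^{k+1}Q_j}$ (dominated pointwise on $Q_j$ by $M_{L(\log L)^\epsilon} w$). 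Summing geometrically in $k$ and then over $j$ against $|Q_j|\lambda \sim \int_{Q_j}|f|$ produces the bound by $\frac{1}{\epsilon}\int |f|\,M_{L(\log L)^\epsilon} w$, the factor $1/\epsilon$ entering precisely as the duality constant between $\Phi$ and $\bar{\Phi}$ as $\epsilon \to 0$.

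Estimates \eqref{e:Mr} and \eqref{e:logA} then follow from \eqref{e:eps} by tuning $\epsilon$. H\"older's inequality gives $M_{L(\log L)^\epsilon} w \lesssim (Mw^r)^{1/r}$ with constant of order $(r')^\epsilon$; choosing $\epsilon \sim 1/\log r'$ balances this against the $1/\epsilon$ factor and yields $(1 + \log r')$ in \eqref{e:Mr}. For \eqref{e:logA}, the sharp reverse H\"older inequality for $A_\infty$ weights permits $r = 1 + c/\log(1+[w]_{A_\infty})$ while preserving $(Mw^r)^{1/r} \lesssim Mw$; combined with the $A_1$ inequality $Mw \leq [w]_{A_1} w$, this produces the stated $[w]_{A_1}\log(1+[w]_{A_\infty})$. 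The principal obstacle is the sharp $1/\epsilon$ dependence in the bad-part estimate: it forces exact computation of the constant in the Orlicz-H\"older inequality and in the pointwise control $\|w\|_{\Phi, Q} \lesssim M_\Phi w$, both of which degrade as $\epsilon \to 0$ and must be balanced precisely against the geometric decay $2^{-k\delta}$ from the kernel smoothness.
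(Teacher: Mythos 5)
The decisive gap is in your treatment of the good part. You invoke a ``Fefferman--Stein weighted $L^2$ bound for $T^\ast$'', i.e. $\int |T^\ast g|^2 w \lesssim \int |g|^2\, Mw$, to conclude $\lambda w\{|T^\ast g|>\lambda/2\}\lesssim \int |f|\,Mw$. No such inequality is available for general weights: the known two-weight $L^p$ bounds (C\'ordoba--Fefferman, P\'erez) require a strictly larger maximal function on the right, namely $M_r$ with $r>1$ or $M_{L(\log L)^{p-1+\delta}}\approx M^{[p]+1}$, and the logarithmic bump is known to be necessary, so the $L^2$ inequality with plain $Mw$ is false. Moreover, your architecture proves too much: the exceptional set is handled by \eqref{e:FS}, and the bad part, using the mean-zero property of the $b_j$ and the Dini condition, is in fact controlled by $\int |f|\,Mw$ alone (the standard annulus computation produces $\inf_{Q_j} Mw$; your Orlicz bump there is not where the difficulty lives). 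So if the good part could also be handled with $Mw$, the full Muckenhoupt--Wheeden inequality $\lambda w(T^\ast f>\lambda)\lesssim\int|f|\,Mw$ would follow, contradicting Reguera--Thiele. The flaw is therefore structural, not cosmetic: in any Calder\'on--Zygmund-decomposition proof of \eqref{e:eps}, the bump $M_{L(\log L)^\epsilon}$ and the factor $1/\epsilon$ must enter through the good part (in the prior proofs this is done by passing through $L^p$ estimates with bumped maximal functions and choosing $p\approx 1$), not through the bad part where you placed them.

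The rest of your outline is consistent with the paper: \eqref{e:Mr} does follow from \eqref{e:eps} by optimizing $\epsilon\sim 1/\log_1 r'$, and \eqref{e:logA} follows from \eqref{e:Mr} via the sharp reverse H\"older estimate \eqref{e:srh} together with $Mw\le [w]_{A_1}w$. Note, however, that the paper does not reprove \eqref{e:eps} by a Calder\'on--Zygmund decomposition at all; it deduces Theorem~\ref{t:HP} as a special case of Theorem~\ref{t:borderline} applied to sparse operators, with $\varphi(t)=t(\log_1 t)^{\epsilon}$ giving \eqref{e:eps} and $\varphi(t)=t^r$ giving \eqref{e:Mr}, precisely to sidestep the obstruction described above.
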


Recall that $A_p$ weights, $ 1 < p < \infty $ are those non-negative weights $ w$ satisfying 
\begin{equation*}
[ w ] _{A_p}  = \sup _{Q} \frac 1{ \lvert  Q \rvert}  \int _{Q} w \; dx \cdot  
\Bigl( \frac 1{ \lvert  Q \rvert}  \int _{Q} w ^{-\frac 1 {p-1}}\; dx \Bigr)^{p-1}< \infty , 
\end{equation*}
where the supremum is over cubes in $ \mathbb R ^{d}$.  
As $ p$ decreases to $ 1$, the condition above strengthens to 
\begin{equation*}
[ w ] _{A_1} = \sup _{x\in\mathbb R^d}  \frac {M w (x)} {w (x)} < \infty, 
\end{equation*}
and as $ p$ tends to $ \infty $, the condition weakens to 
\begin{equation*}
[w ]_{A _{\infty }} = \sup _{Q} \frac { \int _{Q} M (w \mathbf 1_{Q})} {w (Q)}.  
\end{equation*}
The estimate for $M_r$, \eqref{e:Mr},  is not explicitly mentioned in \cite{MR3327006} but it can be derived from \eqref{e:eps} using the optimization argument in \cite{MR3327006}*{Cor. 1.4}. The result for $A_1$ weights then follows by an appropriate choice of $r>1$ based on the sharp reverse H\"older's inequality for the $A_\infty$ constant \cite{MR3092729}*{Thm. 2.3}, recalled in \eqref{e:srh} below.  
\smallskip

Here $T ^{\ast }$ is the maximal truncation of $T$. Precise definitions of some standard objects, like 
Calder\'on-Zygmund operators, are given in section 2. 
The operator 
$M_{L (\log L) ^{\epsilon }}$ is the modified Hardy-Littlewood maximal operator with respect to the function $\varphi(t)=t(1+\log_+t)^\epsilon$. More precisely,
\begin{equation*}
M_{\varphi (L)}w(x):= \sup_{x\in Q} \|w\|_{\varphi (L),Q},
\end{equation*}
and
\begin{equation*}
\|w\|_{\varphi (L),Q}= \inf\left\{\lambda>0:\frac{1}{|Q|}\int_Q \varphi\left(\frac{w(x)}{\lambda}\right)dx\leq 1\right\}.
\end{equation*}
Notice that if $ r\geq 1$ and $\varphi(t)=t ^{r}$, then $M_{\varphi (L)} w = ( M w ^{r} ) ^{1/r}= M_r w$. 
Finally, let us recall some properties of Young functions and Orlicz spaces (see \cite{orlicz} for more details).  Throughout the paper, $\varphi:[0,\infty)\rightarrow [0,\infty)$ will be a \emph{Young function}, that is, a convex, increasing function such that $\varphi(0)=0$ and $\lim_{t\rightarrow\infty}\varphi(t)=\infty$. From these properties, on can deduce that its inverse $\varphi^{-1}$ exists on $(0,\infty)$. Moreover, given a Young function, we can define its complementary function $\psi$ by
\begin{equation*}
\psi(s)=\sup_{t>0}\,\{st-\varphi(t)\}.
\end{equation*}
We will assume that $\lim_{t\rightarrow \infty} \varphi(t)/t=\infty$ to ensure that $\psi$ is finite valued. Under these conditions, $\psi$ is also a Young function and it is associated with the dual space of $\varphi(L)$. More precisely, one has the following generalized H\"older's inequality: 
\begin{equation}\label{e:holder}
\frac{1}{|Q|}\int_Q |f(x)g(x)|dx\lesssim \|f\|_{\varphi(L),Q}\|g\|_{\psi(L),Q}.
\end{equation}

The main result pushes the prior result to log-log scale as reflected in the tower of height 2 in \eqref{e:suff}.

\begin{theorem}\label{t:borderline}  
Suppose the Young function $ \varphi $ satisfies 
\begin{equation}\label{e:suff}
c_\varphi = \sum_{k=1} ^{\infty }   \frac {1 } { \psi^{-1} (2 ^{2 ^{k}})}    < \infty. 
\end{equation}
Then,   for all Calder\'on-Zygmund operators $ T$, and any weight $ w$ on $ \mathbb R ^{d}$, it holds that
\begin{equation}\label{e:weak}
\sup_{\lambda >0} \lambda w\{  T^\ast f > \lambda \} \lesssim c_\varphi   \int_{\mathbb R^d} \lvert  f(x)\rvert\, M _{\varphi(L)} w(x) \; dx.
\end{equation}
\end{theorem}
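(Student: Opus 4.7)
The plan is to combine a pointwise sparse-form reduction of $T^\ast$ with a Calderón-Zygmund decomposition of $f$ at height $\lambda$, and then to exploit a doubly-exponential grouping of generations that refines the scale-by-scale summation underlying Theorem~\ref{t:HP}. Using the now-standard pointwise sparse domination for maximal truncations of Calderón-Zygmund operators, one may write
\begin{equation*}
T^\ast f(x) \lesssim \sum_{i=1}^{N} \mathcal{A}_{\mathcal{S}_i} f(x), \qquad \mathcal{A}_{\mathcal{S}} f := \sum_{Q\in\mathcal{S}} \langle |f|\rangle_Q \mathbf{1}_Q,
\end{equation*}
for finitely many sparse families $\mathcal{S}_i$. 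This reduces \eqref{e:weak} to the same estimate with a single sparse operator $\mathcal{A}_{\mathcal{S}}$ in place of $T^\ast$.

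Fix $\lambda>0$ and apply a Calderón-Zygmund decomposition of $f$ at height $\lambda$, producing maximal stopping cubes $\{Q_j\}$ with $\lambda < \langle |f|\rangle_{Q_j} \leq 2^d \lambda$ and a splitting $f = g + b$, $b = \sum_j b_j$, $b_j := (f - \langle f \rangle_{Q_j})\mathbf{1}_{Q_j}$. The enlarged bad set $\Omega^\ast := \bigcup_j 3Q_j$ satisfies $\lambda w(\Omega^\ast) \lesssim \int |f|\, Mw\, dx$ by \eqref{e:FS}, which is already dominated by the right-hand side of \eqref{e:weak}. Since $\|g\|_\infty \lesssim \lambda$, the good part is controlled by a standard weighted $L^2$ argument against $Mw$. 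It thus suffices to bound $w\bigl(\{|\mathcal{A}_{\mathcal{S}} b| > \lambda/C\} \setminus \Omega^\ast\bigr)$.

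On $(\Omega^\ast)^c$ only sparse cubes $Q \in \mathcal{S}$ strictly containing some $Q_j$ contribute to $\mathcal{A}_{\mathcal{S}} b_j$. The key step is to group such cubes first by generation $k$ relative to $Q_j$, and then further into \emph{blocks} $k \in [2^\ell, 2^{\ell+1})$, applying the generalized Hölder inequality \eqref{e:holder} once per block at the outer scale $R_\ell \simeq 2^{2^{\ell+1}} Q_j$. Combining the Orlicz identity $\|\mathbf{1}_{Q_j}\|_{\psi(L), R_\ell} \approx 1/\psi^{-1}(|R_\ell|/|Q_j|) \approx 1/\psi^{-1}(2^{2^\ell})$ with the pointwise control $\|w\|_{\varphi(L),R_\ell} \leq M_{\varphi(L)}w(x_j)$ at any $x_j \in Q_j$, the block-$\ell$ contribution to the cube $Q_j$ is at most $\lambda |Q_j|\, \psi^{-1}(2^{2^\ell})^{-1} M_{\varphi(L)}w(x_j)$. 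Summing in $\ell$ yields the constant $c_\varphi$ of \eqref{e:suff}, and summing in $j$ (using sparseness and $\lambda|Q_j| \leq \int_{Q_j} |f|$) gives the desired bound.

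The main obstacle is justifying the block estimate without losing a factor $2^\ell$: a scale-by-scale application of \eqref{e:holder} would only produce $\sum_k \psi^{-1}(2^k)^{-1}$, recovering at best the $L(\log L)^\epsilon$-scale of Theorem~\ref{t:HP}. Obtaining instead the summation $\sum_\ell \psi^{-1}(2^{2^\ell})^{-1}$ requires trading an entire block of $2^\ell$ sparse cubes for a \emph{single} Orlicz norm at the outermost scale of the block. I expect this consolidation to be the substantive part of the argument, likely carried out by selecting principal cubes at doubly exponential $M_{\varphi(L)}w$-density thresholds and applying a Carleson-embedding argument adapted to those thresholds. Once this is done without loss, the remaining summation over blocks is geometric and \eqref{e:weak} follows.
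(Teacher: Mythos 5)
The substantive step of your argument --- the ``block consolidation'' by which $2^{\ell}$ consecutive generations of sparse ancestors of a Calder\'on--Zygmund cube $Q_j$ are traded for a single factor $\psi^{-1}(2^{2^{\ell}})^{-1}$ --- is exactly the step you leave unproven, and as stated it is false. The gain in Lemma~\ref{l:orlicz} / \eqref{e:holder} comes from restricting $w$ to a set of \emph{tiny relative measure}: $\|\mathbf 1_{E}\|_{\psi(L),Q}\approx 1/\psi^{-1}(|Q|/|E|)$ only helps when $|E|/|Q|$ is small. In your block the contribution is $\sum_{k\in[2^{\ell},2^{\ell+1})}\langle |b_j|\rangle_{Q^{(k)}}\,w\bigl(Q^{(k)}\setminus\Omega^{\ast}\bigr)\approx\lambda|Q_j|\sum_k\langle w\mathbf 1_{(\Omega^\ast)^c}\rangle_{Q^{(k)}}$, where each average involves the weight on essentially all of the intermediate cube $Q^{(k)}$, not on a set of relative measure $2^{-2^{\ell}}$; already for $w\equiv 1$ each term is comparable to $M_{\varphi(L)}w(x_j)$, so the block sum is of order $2^{\ell}\,\lambda|Q_j|\,M_{\varphi(L)}w(x_j)$ rather than the claimed $\psi^{-1}(2^{2^{\ell}})^{-1}\lambda|Q_j|\,M_{\varphi(L)}w(x_j)$. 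There is simply no source of smallness in your setup, and no choice of ``principal cubes at doubly exponential density thresholds'' can manufacture it: after a Calder\'on--Zygmund decomposition at a single height, the chain of sparse ancestors of $Q_j$ is unbounded and must be summed in full, which is precisely why this route stalls at the $L(\log L)^{\epsilon}$ scale of Theorem~\ref{t:HP}. (A secondary gap: the good-part bound you invoke, $\int|\mathcal A_{\mathcal S}g|^2w\lesssim\int|g|^2Mw$, is not a standard fact --- the $L^2$ Fefferman--Stein inequality with a single $M$ fails for Calder\'on--Zygmund and sparse operators; known substitutes require bumps like $M_{L(\log L)^{1+\delta}}$.)

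The paper's mechanism is different in both of the respects it flags in the introduction, and this is what makes the doubly exponential gain possible. There is no Calder\'on--Zygmund decomposition of $f$: instead one fixes the level set $\mathcal E=\{4<Tf\le 8\}\setminus\{Mf>2^{-2}\}$ (a restricted weak-type reduction, legitimate despite weak-$L^1$ being only a quasinorm because one integrates $Tf\,w$ over $\mathcal E$), and stratifies the sparse family by the size of the averages, $\langle f\rangle_Q\approx 4^{-k}$, and by layers $\mathcal S_{k,v}$. Two facts then interact: (i) on $\mathcal E$ at most $\approx 4^{k}$ layers of $\mathcal S_k$ can appear, and the first $u=2^{k}$ layers below any cube contribute at most $2^{k}\cdot 4^{-k}w(\mathcal E)=2^{-k}w(\mathcal E)$, a term absorbed into the left-hand side after summing in $k$; (ii) by sparsity \eqref{e:sparse}, the descendants at depth $u=2^{k}$ occupy a set $Q_u$ with $|Q_u|\le 8^{-2^{k}}|Q|$, and \emph{only there} is Lemma~\ref{l:orlicz} applied, to $w\mathbf 1_{\mathcal E\cap Q_u}$, producing the factor $\psi^{-1}(2^{2^{k}})^{-1}$ in Lemma~\ref{l:basic}. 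Both ingredients --- the absorption of $2^{-k}w(\mathcal E)$ and the exponentially small exceptional set at depth $2^{k}$ --- are unavailable once you have committed to summing, for each fixed $Q_j$, over all of its sparse ancestors; so the proof needs to be reorganized along the paper's lines rather than patched within your framework.
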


We will see that this result contains Theorem~\ref{t:HP}.  It also contains new estimates such as those presented in Corollary \ref{c:borderline} below. 
Notice that in \eqref{e:iii}, we very nearly have a double log in the maximal function, a possibility that 
was alluded to by Hyt\"onen-P\'erez \cite{MR3327006}. For simplicity, from now on we will adopt the following notation:
\begin{equation*}
\log_1(x):=1+\log_+(x) \quad \text{and}\quad \log_k(x):=\log_1\log_{k-1}(x), \,\, \text{for } k>1.
\end{equation*}

\begin{corollary}\label{c:borderline}
Under the assumptions of Theorem \ref{t:borderline}, for $1<\alpha<2$ it holds that,


\begin{equation} \label{e:ii}
\sup _{\lambda >0} \lambda w \{  T^\ast f > \lambda \} \lesssim \frac 1 {\alpha -1}\int_{\mathbb R^d} \lvert  f(x)\rvert\, M _{ L (\log_2 L) ^{\alpha } } w(x) \; dx,
\end{equation}
and in fact, one can also reach
\begin{equation} \label{e:iii}
\sup _{\lambda >0} \lambda w \{  T^\ast f > \lambda \} \lesssim \frac 1 {\alpha -1}\int_{\mathbb R^d} \lvert  f(x)\rvert \,M _{ L \log_2 L  (\log_3 L)^{\alpha} } w(x) \; dx  . 
\end{equation}

\end{corollary}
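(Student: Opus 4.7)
The plan is to reduce both \eqref{e:ii} and \eqref{e:iii} to Theorem~\ref{t:borderline} applied to the Young functions
\begin{equation*}
\varphi_1(t) = t\,\log_2(t)^\alpha, \qquad \varphi_2(t) = t\,\log_2(t)\,\log_3(t)^\alpha,
\end{equation*}
with the tacit understanding that each $\varphi_i$ is replaced, if necessary, by a genuine Young function agreeing with the displayed expression for $t$ sufficiently large. This modification affects the maximal operators $M_{\varphi_i(L)}$ only up to a multiplicative constant, so everything reduces to the quantitative estimate
\begin{equation*}
c_{\varphi_i} = \sum_{k\ge 1} \frac{1}{\psi_i^{-1}(2^{2^k})} \lesssim \frac{1}{\alpha-1}, \qquad i=1,2,
\end{equation*}
where $\psi_i$ denotes the complementary Young function of $\varphi_i$.

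To compute $\psi_i^{-1}$ I would invoke the classical duality relation for complementary pairs, $t \le \varphi^{-1}(t)\,\psi^{-1}(t) \le 2t$, which reduces the question to that of $\varphi_i^{-1}$. Since the iterated logarithmic factors are slowly varying, inversion of $\varphi_i$ is routine and gives, at infinity,
\begin{align*}
\varphi_1^{-1}(u) &\asymp \frac{u}{\log_2(u)^\alpha}, & \varphi_2^{-1}(u) &\asymp \frac{u}{\log_2(u)\,\log_3(u)^\alpha}, \\
\psi_1^{-1}(u)   &\asymp \log_2(u)^\alpha,            & \psi_2^{-1}(u)   &\asymp \log_2(u)\,\log_3(u)^\alpha.
\end{align*}

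Evaluating at $u=2^{2^k}$ and using the identities $\log_2(2^{2^k}) \asymp k$ and $\log_3(2^{2^k}) \asymp \log k$ for $k$ large, one obtains
\begin{equation*}
c_{\varphi_1} \asymp \sum_{k\ge 1}\frac{1}{k^\alpha}, \qquad c_{\varphi_2} \asymp \sum_{k\ge 2}\frac{1}{k\,(\log k)^\alpha}.
\end{equation*}
For $1 < \alpha < 2$, both series are comparable to $1/(\alpha-1)$ by comparison with $\int_1^\infty x^{-\alpha}\,dx$ and $\int_2^\infty (x(\log x)^\alpha)^{-1}\,dx$ respectively. Theorem~\ref{t:borderline} then yields both \eqref{e:ii} and \eqref{e:iii}.

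The main care required is in two technical places: first, verifying the asymptotics $\psi_i^{-1}(u) \asymp \log_2(u)^\alpha$, and its analogue for $\varphi_2$, with enough precision to sum the series correctly (the inequality $\varphi^{-1}\psi^{-1}\asymp t$ only pins down $\psi_i^{-1}$ up to a bounded multiplicative factor, which is harmless here); and second, fixing a Young-function extension of $\varphi_i$ near $t=0$ that is genuinely convex and increasing without disturbing the Orlicz norm on the scales relevant to Theorem~\ref{t:borderline}. Neither issue looks like a true obstacle, but both need to be written out carefully to produce an honest proof.
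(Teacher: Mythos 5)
Your proposal is correct and follows essentially the same route as the paper: apply Theorem~\ref{t:borderline} to $\varphi(t)=t(\log_2 t)^{\alpha}$ and $\varphi(t)=t\log_2 t(\log_3 t)^{\alpha}$, show that $\psi^{-1}(2^{2^{k}})$ is bounded below by $k^{\alpha}$ resp.\ $k(\log k)^{\alpha}$, and sum the resulting series to get the $1/(\alpha-1)$ constant. The only (harmless) difference is that you obtain $\psi^{-1}(u)\gtrsim L(u)$ via the classical relation $u\le \varphi^{-1}(u)\psi^{-1}(u)\le 2u$, whereas the paper verifies $\psi(L(t))\lesssim t$ directly by the mean-value theorem and the bound $rL'(r)\le C$.
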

\medskip

Most of the prior arguments \cites{MR3092729, MR3327006, MR1260114, MR2427454} depend upon extrapolation 
type arguments, namely establishing a range of $ L ^{p}$ inequalities, and then making an appropriate choice of $ p \approx 1$ to conclude the argument at $ L ^{1}$. 
We address these two  points in the (short) proof of Theorem~\ref{t:borderline}.    
\begin{itemize}
\item One should work directly with the weak-type norm, avoiding a Calder\'on-Zygmund decomposition approach involving $L^p$ estimates for some $p>1$.   
This is addressed by our decomposition of the sparse operator based upon the function $ f$. 

\item The quasi-norm nature of the weak-$ L ^{1}$ norm is  accounted for by using a restricted weak-type approach, and the specific structure of the operators in question. 

\end{itemize}

Concerning sharpness, it seems very likely that  Theorem~\ref{t:borderline} does not have any essential strengthening. 
But, the main counterexamples have at their heart a counterexample to an $ L ^2 $ inequality 
for martingale transforms,  as in Reguera's first paper on the subject \cite{MR2799801}.   Perhaps one could rethink the counterexamples  for sparse operators.  They must exist, but they seem somewhat involved to construct directly.  

\bigskip 

We turn to our second result, which concerns square functions.  Define the 
intrinsic square function of M.~Wilson  \cite{wilson1} $ G _{\alpha }$, for $ 0< \alpha < 1$ by 
\begin{align*}
G_{\alpha}f(x) ^2 =  \int_{\Gamma(x)} A_{\alpha}f(y,t)^2   \frac {dy dt} {t ^{n+1}} 
\end{align*}
where  $\Gamma(x):= \{ (y,t) \in \mathbb R ^{n+1}_+ \;:\; \lvert  y\rvert < t \}$ is the cone of aperture one in the upper-half plane, 
and 
\begin{align*}
A_{\alpha}f(x,t) &= \sup_{\gamma \in C_{\alpha}} |f \ast \gamma_t(x)|
\end{align*}
where $\gamma_t(x) = t^{-n} \gamma(x t^{-n})$ and  $C_{\alpha}$ is the collection of functions $\gamma$ supported in the unit ball with mean zero and such that $| \gamma(x) - \gamma(y) | \le |x - y|^{\alpha}$.  
The sharp weighted strong type norms for the square function have a critical case at $ p=3$, see Lerner \cite{MR2770437}.

We are concerned with the weak-type bounds, which have a critical case of $ p=2$, at which a $ (\log_1 [w] _{A_ \infty }) ^{1/2} $ 
appears in the sharp estimate. 

\begin{theorem}\label{t:sq} 
For $ 1\leq p < \infty $, and any weight $ w\in A_p$,  there holds 
\begin{gather*}
\lVert G _{\alpha } f\rVert _{L ^{p, \infty } (w)} \leq C_p (w) \lVert f\rVert _{L ^{p} (w)} 
\\
\textup{where} \qquad 
C_p (w) = 
\begin{cases}
[w] _{A_p}^{1/p}  & 1\leq p < 2 
\\
[w] _{A_p} ^{1/2} ( \log_1 [w] _{A_ \infty }) ^{1/2}  & 2\leq p < \infty 
\end{cases}
\end{gather*}
\end{theorem}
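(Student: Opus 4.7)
The plan is to reduce the problem to a critical weak-$L^2(w)$ estimate via sparse domination, settle that critical case with a stopping-time analysis, and recover the remaining values of $p$ by standard interpolation and extrapolation.

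First, I would invoke a pointwise sparse domination of the intrinsic square function,
\[
G_{\alpha} f(x) \lesssim \mathcal{G}_{\mathcal S} f(x) := \biggl(\sum_{Q \in \mathcal S} \langle |f| \rangle_Q^{2}\, \mathbf 1_Q(x)\biggr)^{1/2},
\]
for some sparse collection $\mathcal S$ depending on $f$ (in the spirit of Lerner and the subsequent work on sparse bounds for square functions). The problem then reduces to the corresponding weak-type estimates for $\mathcal G_{\mathcal S}$.

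For the critical case $p=2$, the pointwise inequality on the level set $E_{\lambda} = \{\mathcal G_{\mathcal S} f > \lambda\}$ gives the Chebyshev-type bound
\[
\lambda^{2}\, w(E_{\lambda}) \le \sum_{Q \in \mathcal S} \langle f \rangle_{Q}^{2}\, w(Q \cap E_{\lambda}).
\]
I would then run a restricted weak-type reduction (writing $f = \sigma g$ with $\sigma = w^{-1}$) and organize $\mathcal S$ by a principal-cube stopping time: $Q$ is declared principal when its $\sigma$-average $\langle g \rangle_{Q,\sigma}$ has doubled since its nearest principal ancestor. By the sharp reverse Hölder inequality for $A_{\infty}$ weights, the principal tree has effective depth $O(\log_{1} [w]_{A_{\infty}})$. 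Within each level, a Carleson embedding tailored to the sparse structure contributes at most $[w]_{A_{2}} \|f\|_{L^{2}(w)}^{2}$; summing over the $\log_{1} [w]_{A_{\infty}}$ levels yields the bound $[w]_{A_{2}} \log_{1}[w]_{A_{\infty}}\, \|f\|_{L^{2}(w)}^{2}$, whose square root is the target weak-type constant at $p=2$.

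The remaining ranges of $p$ follow by standard tools. For $p=1$, a Calder\'on-Zygmund decomposition combined with the sparse bound gives $\|G_{\alpha} f\|_{L^{1,\infty}(w)} \lesssim [w]_{A_{1}} \|f\|_{L^{1}(w)}$, matching $C_{1}(w)$ with no logarithm. For $1 < p < 2$, Marcinkiewicz interpolation between the $p=1$ and $p=2$ endpoints produces the exponent $1/p$ on $[w]_{A_{p}}$ (the logarithm is absorbed at the $p=1$ endpoint). For $p > 2$, sharp weighted extrapolation transfers the $p=2$ weak-type inequality to $L^{p}(w)$ while preserving the $A_{\infty}$ logarithm and promoting the $A_{2}$ constant to $A_{p}$. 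The principal obstacle is the $p=2$ stopping-time analysis, specifically extracting only a logarithmic (rather than polynomial) dependence on $[w]_{A_{\infty}}$. The log emerges because the quasi-norm nature of $L^{2,\infty}(w)$ lets a sum of $O(\log_{1}[w]_{A_{\infty}})$ per-level contributions cost merely that logarithm, mirroring the philosophy behind Theorem~\ref{t:borderline}: work directly with the weak-type norm rather than passing through any $L^{p}$ with $p > 1$.
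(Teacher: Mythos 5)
The central step of your proposal --- the claim that the principal-cube stopping tree built from doublings of $\langle g\rangle_{Q,\sigma}$ has effective depth $O(\log_1 [w]_{A_\infty})$ --- is unjustified, and it is false in general: the number of times the $\sigma$-averages of an arbitrary $g$ can double along a chain of sparse cubes is dictated by $g$, not by the weight, and the sharp reverse H\"older inequality \eqref{e:srh} says nothing about it. In the usual principal-cube scheme the tree has unbounded depth and one sums a geometric series instead; that route does not produce the logarithm. The actual mechanism is different: after normalizing $\lambda\approx 1$ and discarding $\{Mf>1\}$ (at cost $[w]_{A_p}^{1/p}$ via the weak-type maximal bound), one slices the sparse family by the size of the \emph{Lebesgue} averages, $\mathcal S_m=\{Q:\langle f\rangle_Q\approx 2^{-m}\}$. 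The slices with $0\le m\lesssim \log_1[w]_{A_\infty}$ each obey the strong-type bound \eqref{Ap_bound} with constant $[w]_{A_p}^{1/2}$, and summing them by Minkowski in $L^{p/2}(w)$ is what costs the factor $(\log_1[w]_{A_\infty})^{1/2}$; the slices with $2^{-m}\lesssim [w]_{A_\infty}^{-1}$ are handled by Lemma~\ref{Ainfty}, which uses the exponential local integrability (John--Nirenberg, from sparsity) of $\sum_{Q\in\mathcal S_m}\mathbf 1_Q$ together with \eqref{e:srh} to convert exponentially small Lebesgue measure of the relevant level sets into $w$-measure, with decay $\exp(-c\,2^{m}/[w]_{A_\infty})$. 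Your sketch contains neither the slicing by average size nor any substitute for this tail estimate, so the source of the logarithm --- the heart of the theorem --- is missing.

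The peripheral reductions also do not work as stated. For $1<p<2$ you cannot Marcinkiewicz-interpolate between a $p=1$ bound for $A_1$ weights and a $p=2$ bound for $A_2$ weights, because a fixed $w\in A_p$ with $1<p<2$ need not belong to $A_1$, so the $p=1$ endpoint is simply unavailable for that weight; the paper quotes the known weak-type bound of Lacey--Scurry for this range. For $p>2$, the paper deliberately avoids extrapolating the mixed bound $[w]_{A_2}^{1/2}(\log_1[w]_{A_\infty})^{1/2}$ from $p=2$ --- it notes there is no clear-cut $A_\infty$-extrapolation reference for square functions --- and instead proves the estimate directly for all $2\le p<\infty$, extrapolating only the pure $A_2$ strong-type bound for each slice $S_m$. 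Asserting that ``sharp weighted extrapolation preserves the $A_\infty$ logarithm'' is precisely the step the authors declined to take without proof, so it cannot be waved through here either.
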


In the next section, we recall some basic facts, including the reduction to sparse operators. 
Theorem~\ref{t:borderline} is then proved, followed by the Theorem~\ref{t:sq}.

\section{Background} 

To say that $ T$ is a  Calder\'on-Zygmund operator is to say that for $ f, g$ Schwartz functions on $ \mathbb R ^{d}$, 
with a positive distance between their supports, there holds 
\begin{equation*}
\langle T f, g \rangle = \int_{\mathbb R^d} \int_{\mathbb R^d} K (x,y) f (y) g (x) \;dx\,dy,
\end{equation*}
where $ K \;:\; \mathbb R ^{d} \times \mathbb R ^{d} \mapsto \mathbb R $ satisfies the size and smoothness conditions 
\begin{gather*}
\lvert  K (x,y)\rvert \leq \frac 1 {\lvert  x-y\rvert^d }, \qquad x\neq y,
\\
\lvert  K (x,y) - K (x',y)\rvert \leq \frac {\omega \bigl(\frac {\lvert  x-x'\rvert } {\lvert  x-y\rvert } \bigr)} {\lvert  x-y\rvert ^{d} }, 
\qquad 2 \lvert  x-x'\rvert \leq \lvert  x-y\rvert  , 
\end{gather*}
and the same inequality with the roles of the variables reversed also holds.  Here the function      $ \omega \;:\; [0,1]\mapsto [0,1] $ 
is a \emph{Dini modulus of continuity}, that is a decreasing function such that 
\begin{equation*}
\int_0^1\omega(t)\frac{dt}{t}<\infty.
\end{equation*}
We furthermore assume that $ T$ extends to a bounded operator on $ L ^2 (\mathbb R ^{n})$, with norm at most one. 
As usual, we will define the maximal truncation $T^*$ by
\begin{equation*}
T^*f(x):=\sup_{\delta>0}\left|\int_{|x-y|>\delta}K(x,y)f(y)\,dy\right|.
\end{equation*}
In this setting, a result in \cite[Theorem 5.2]{150105818} provides the pointwise domination of both $Tf$ and $T^*f$ by a sum of, at most, $3^d$ sparse operators, adapted to (shifted) dyadic grids.

A \emph{sparse operator} is of the form 
\begin{equation*}
Tf = \sum_{Q\in \mathcal S} \langle f \rangle_Q \mathbf 1_{Q}, 
\end{equation*} 
where the collection $ \mathcal S $ consists of dyadic cubes which are sparse, in the sense that 
for all $ Q\in \mathcal S$, there holds 
\begin{equation} \label{e:sparse} 
\Bigl\lvert  \bigcup _{ Q'\in \mathcal S \;:\; Q'\subsetneq Q } Q' \Bigr\rvert\leq 8^{-1} \lvert  Q\rvert.   
\end{equation}
With this domination result at hand, we can restrict our attention to sparse operators in proving Theorem~\ref{t:borderline}. 

Concerning square functions, a variant of the argument in \cite{150105818} shows that the intrinsic 
square function is dominated by a sum of at most $ 3 ^{d}$ sparse square functions defined by 
\begin{equation} \label{e:Sdef}
Sf ^2  = \sum_{Q\in \mathcal S} \langle f \rangle_Q  ^2 \mathbf 1_{Q}.  
\end{equation}
And so, to prove Theorem~\ref{t:sq}, it suffices to prove the same estimate for the square functions $S f $.  
\medskip

This elementary lemma on the $\varphi (L)$-norm will be needed.  

\begin{lemma}\label{l:orlicz} Given a cube $Q\subset \mathbb R^d$, suppose that $ w \;:\; E \mapsto [0, \infty )$ 
 where $ E\subset Q$.  Then, 
\begin{equation}\label{e:norm>}
\langle w\rangle_Q=\frac{1}{|Q|}\int_Qw(x)\;dx\lesssim \frac{\lVert w \rVert _{{\varphi (L)},Q }} { \psi^{-1} (|Q|/|E|)}.
\end{equation}

\end{lemma}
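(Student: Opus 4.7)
The plan is to apply the generalized Hölder inequality \eqref{e:holder} in the Orlicz pairing $(\varphi,\psi)$, taking advantage of the fact that $w$ is supported on $E \subset Q$.

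Since $w$ vanishes outside $E$, we may write $w = w \cdot \mathbf{1}_E$ and compute
\begin{equation*}
\langle w \rangle_Q = \frac{1}{|Q|} \int_Q w(x) \mathbf 1_E (x) \, dx \lesssim \|w\|_{\varphi(L),Q}\,\|\mathbf 1_E\|_{\psi(L),Q}.
\end{equation*}
So the task reduces to evaluating $\|\mathbf 1_E\|_{\psi(L),Q}$, which I would do directly from the definition of the Luxemburg norm.

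For any $\lambda > 0$, the condition $\frac{1}{|Q|}\int_Q \psi(\mathbf 1_E(x)/\lambda)\,dx \leq 1$ becomes $\frac{|E|}{|Q|}\psi(1/\lambda) \leq 1$ (using $\psi(0)=0$), equivalently $1/\lambda \leq \psi^{-1}(|Q|/|E|)$. Taking the infimum over admissible $\lambda$ gives the exact identity
\begin{equation*}
\|\mathbf 1_E\|_{\psi(L),Q} = \frac{1}{\psi^{-1}(|Q|/|E|)}.
\end{equation*}
Combining this with the Hölder estimate above yields the claimed inequality \eqref{e:norm>}.

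There is no real obstacle here; the only subtle point is checking that $\psi^{-1}$ is well-defined and that the argument works for an arbitrary Young function satisfying the hypotheses imposed earlier in the paper (finite-valued complementary function, $\psi(0)=0$ and $\psi$ increasing to infinity), which is exactly the standing assumption.
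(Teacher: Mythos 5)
Your proposal is correct and is exactly the paper's argument: the paper also proves the lemma by applying the generalized H\"older inequality \eqref{e:holder} to $w = w\mathbf 1_E$ and computing $\lVert \mathbf 1_E\rVert_{\psi(L),Q} = 1/\psi^{-1}(|Q|/|E|)$; you have simply written out the Luxemburg-norm computation that the paper leaves implicit, and that computation is accurate.
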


\begin{proof}
This is just H\"older's inequality \eqref{e:holder} applied to $w=w\mathbf 1_{E}$ and the computation of $\|\mathbf 1_E\|_{\psi(L),Q}$. 
\end{proof}

A final remark is that we will repeatedly use the (very easy)  sharp weak-type $ A_p$ estimate for the maximal function 
\begin{equation*}
\lambda ^{p}w \bigl( M f > \lambda  \bigr) \lesssim  [w] _{A_p} \lVert f\rVert _{L ^{p} (w)}^p. 
\end{equation*}

Our analysis is entirely elementary, except for an appeal to the sharp reverse-H\"older estimate of Hyt\"onen-P{\'e}rez \cite{MR3092729}*{Thm. 2.3}. 

\begin{theorem}\label{t:srh}  There is a dimensional constant $ c>0$ so that for $ w \in A _ \infty $, 
and $ r (w) = 1+ c [w] _{A_ \infty }$,  there holds 
\begin{equation}\label{e:srh}
\langle  w ^{r (w)} \rangle_Q ^{1/ r (w)} \leq 2 \langle w \rangle_Q , \qquad \textup{$ Q$ a cube.} 
\end{equation}
\end{theorem}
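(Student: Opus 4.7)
The plan is to derive \eqref{e:srh} as a quantitative self-improvement of the Fujii--Wilson form $\int_R M(w\mathbf 1_R)\,dx \leq [w]_{A_\infty}\,w(R)$. Since both sides of \eqref{e:srh} scale the same way under $w\mapsto \alpha w$, I fix a cube $Q$ and normalize $t := \langle w\rangle_Q = 1$. The first step is a Calder\'on--Zygmund stopping-time decomposition of $w$ on $Q$: for each threshold $\lambda > 1$, let $\Omega_\lambda$ be the union of the maximal dyadic subcubes $Q^\lambda_j\subseteq Q$ with $\langle w\rangle_{Q^\lambda_j} > \lambda$, so that $\lambda < \langle w\rangle_{Q^\lambda_j}\leq 2^d \lambda$ and $w(x)\leq \lambda$ for almost every $x\in Q\setminus\Omega_\lambda$.

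The second step is the key good-$\lambda$ estimate. Applying Fujii--Wilson inside each stopping cube $Q^\lambda_j$ and invoking Chebyshev gives
\[
\bigl|\Omega_{K\lambda}\cap Q^\lambda_j\bigr| \leq \frac{[w]_{A_\infty}\,w(Q^\lambda_j)}{K\lambda} \leq \frac{2^d [w]_{A_\infty}}{K}\,|Q^\lambda_j|,
\]
so that the choice $K = 2^{d+1}[w]_{A_\infty}$ yields $|\Omega_{K\lambda}| \leq \tfrac12 |\Omega_\lambda|$ after summing in $j$. Iterating along the geometric sequence $\lambda_n = K^n$ produces power decay $|\Omega_\lambda| \leq 2|Q|(\lambda/t)^{-\gamma}$ with $\gamma = \log 2/\log K$, and since $\{w>\lambda\}\cap Q\subseteq \Omega_\lambda$ this immediately gives a quantitative $A_\infty$-type control of the level sets of $w$ itself.

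The third step plugs the previous bound into the weighted layer cake
\[
\int_Q w^{1+\epsilon}\,dx = \epsilon \int_0^\infty \lambda^{\epsilon-1}\, w\bigl(\{w>\lambda\}\cap Q\bigr)\,d\lambda.
\]
Combining the Chebyshev bound $|\{w>\lambda\}\cap Q|\leq w(Q)/\lambda$ with the power decay of step two produces $w(\{w>\lambda\}\cap Q) \leq C\,w(Q)\,\lambda^{-\delta}$ for $\lambda > t$, with $\delta$ comparable to $\gamma$. Splitting the integral at $\lambda=t$ and taking $\epsilon$ a sufficiently small fraction of $\delta$ keeps the tail under control, so that after extracting the $(1+\epsilon)$-th root the multiplicative constant is at most $2$, which is the content of \eqref{e:srh}.

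The principal obstacle is the quantitative calibration of the constants $K$, $\gamma$, $\delta$, and $\epsilon$ so that the final multiplicative constant in \eqref{e:srh} is genuinely $2$ rather than an unspecified $O(1)$, and so that $r(w)-1$ has the sharp scaling in $[w]_{A_\infty}$. A naive halving fraction $1/2$ in the good-$\lambda$ step only yields $\gamma \asymp 1/\log[w]_{A_\infty}$, which is too weak; obtaining the correct scaling requires either letting the halving fraction tend to zero while carefully tracking the dependence in $K$, or, alternatively, invoking the sharp BMO bound $\|\log w\|_{*} \lesssim [w]_{A_\infty}$ and finishing via John--Nirenberg applied to $\log w$.
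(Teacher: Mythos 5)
Note first that the paper does not prove this statement at all: it is quoted as Theorem 2.3 of Hyt\"onen--P\'erez \cite{MR3092729} and used as a black box, so your proposal has to be measured against the known proof of the sharp reverse H\"older inequality rather than against anything in this paper. Your steps 1--2 are correct as far as they go, but they only produce \emph{Lebesgue}-measure decay of the level sets $\Omega_\lambda$, and in fact nothing beyond plain Chebyshev: with the normalization $\langle w\rangle_Q=1$ one already has $|\{w>\lambda\}\cap Q|\leq |Q|/\lambda$, which is a better power than the $\gamma=\log 2/\log K$ obtained from the good-$\lambda$ iteration (and letting the halving fraction shrink only pushes $\gamma$ toward $1$, i.e.\ toward Chebyshev). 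The genuine gap is step 3: the asserted implication ``Chebyshev plus the power decay of step 2 gives $w(\{w>\lambda\}\cap Q)\leq C\,w(Q)\lambda^{-\delta}$'' is not valid. From what you have, the only available bound is $w(\Omega_\lambda)\leq \sum_j w(Q_j^\lambda)\leq 2^d\lambda|\Omega_\lambda|\lesssim |Q|\,\lambda^{1-\gamma}$, which \emph{grows} in $\lambda$, and inserting it into the layer-cake integral $\epsilon\int\lambda^{\epsilon-1}w(\{w>\lambda\}\cap Q)\,d\lambda$ diverges for every $\epsilon>0$. Passing from Lebesgue-measure decay of level sets to $w$-measure decay is essentially equivalent to the reverse H\"older inequality itself (it is how one \emph{uses} \eqref{e:srh}, as the paper does in the proof of Lemma \ref{Ainfty}), so as written the argument is circular at its crucial point. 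The actual proof runs the layer cake on the localized dyadic maximal function $M_Qw$ rather than on $w$, uses the Fujii--Wilson constant in the form $\int_{\{M_Qw>\lambda\}}M_Qw\,dx\leq [w]_{A_\infty}\,w(\{M_Qw>\lambda\})$ (summing the $A_\infty$ bound over the stopping cubes), and closes by an absorption argument, which is exactly what produces the sharp exponent $r(w)-1\simeq [w]_{A_\infty}^{-1}$ with constant $2$.

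Your two proposed rescues in the final paragraph do not repair this. Shrinking the halving fraction only improves the Lebesgue exponent, which, as noted, is not the bottleneck. The John--Nirenberg route fails for a structural reason: JN gives $\langle e^{s|\log w-\langle\log w\rangle_Q|}\rangle_Q\lesssim 1$ only for $s\lesssim 1/\|\log w\|_{\mathrm{BMO}}$, and to deduce $\langle w^{1+\epsilon}\rangle_Q^{1/(1+\epsilon)}\leq 2\langle w\rangle_Q$ you need exponential integrability at the exponent $1+\epsilon>1$; whenever $\|\log w\|_{\mathrm{BMO}}$ is not small (e.g.\ $w(x)=|x|^{\delta-d}$ with $\delta$ small, for which $\|\log w\|_{\mathrm{BMO}}\simeq 1$ but the admissible $\epsilon$ is $\simeq\delta\simeq [w]_{A_\infty}^{-1}$) this threshold lies below $1$, so no reverse H\"older inequality follows at all, let alone one with the $[w]_{A_\infty}^{-1}$ scaling. (As a side remark, the exponent in the statement should be read as $r(w)=1+(c[w]_{A_\infty})^{-1}$, equivalently $r(w)'\simeq [w]_{A_\infty}$; this is how it is used later in the paper.)
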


\section{Proof of  Theorem \ref{t:borderline} and Corollary \ref{c:borderline}} 

Let $f$ be a non-negative function. Due to linearity of the weak-type estimate in $ \lambda $, it suffices to show that 
\begin{equation}
w ( 4  <  T f \leq 8 ) \lesssim \int_{\mathbb R^d} f(x) \,M _{\varphi (L)} w(x) \; dx . 
\end{equation}
Let $ \mathcal E = \{ 4  < T f \leq 8 \} \setminus \{ M f > 2 ^{-2}\}$. In view of the classical inequality \eqref{e:FS} of Fefferman-Stein, it is enough to check that 
\begin{equation}
\label{e:4}
w (\mathcal E) \leq \tfrac 14 \int _{\mathcal E} T f(x) \,w(x) \;dx  \lesssim \int_{\mathbb R^d} f(x)\, M _{\varphi (L)} w(x) \; dx . 
\end{equation}

By getting rid of the set $\{ Mf>2^{-2}\}$, we can eliminate from $ \mathcal S$ all those cubes $ Q$ such that $ \langle f \rangle_Q > 2 ^{-2}$.  
For $ k\ge2$, define $ \mathcal S _{k}$ to be those $ Q\in \mathcal S$ for which $ 4^{-k-1}<\langle f \rangle_Q \leq 4 ^{-k}$, and set
\begin{equation*}
T_kf=\sum_{Q\in \mathcal S_k} \langle f \rangle_Q \mathbf 1_Q.
\end{equation*}  
The key lemma is the following: 

\begin{lemma}\label{l:basic}  For each integer $ k\geq 1$, there is an absolute constant $ C$ such that, 
\begin{equation*}
\int _{\mathcal E} T _{k} f(x)\, w(x) \;dx \leq 
2 ^{-k}  w (\mathcal E) + 
\frac C {\psi^{-1} ( 2 ^{2 ^{k}})}  \int_{\mathbb R^d} f(x)\, M _{\varphi (L)} w(x) \; dx . 
\end{equation*}

\end{lemma}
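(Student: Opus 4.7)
The plan is to split $\mathcal{E}$ at the level set $\Omega_k := \{T_k f > 2^{-k}\}$. On the complement $\mathcal{E}\setminus\Omega_k$ the pointwise bound $T_k f \le 2^{-k}$ is immediate, and integrating against $w$ produces the first term $2^{-k} w(\mathcal{E})$ on the right-hand side. The task therefore reduces to controlling $\int_{\mathcal{E}\cap\Omega_k} T_k f\cdot w$ by the stated Orlicz quantity.

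The key geometric input comes from the sparse structure. Because every $Q\in \mathcal{S}_k$ has $\langle f\rangle_Q\le 4^{-k}$, any $x \in \Omega_k$ must lie in more than $2^k$ cubes of $\mathcal{S}_k$, which are necessarily nested inside some maximal $F\in\mathcal{F}_k$. Iterating the sparseness inequality exactly $2^k$ times shows that $|\Omega_k\cap F|\le 8^{-2^k}|F|$, so
\[
\frac{|F|}{|\mathcal{E}\cap\Omega_k\cap F|}\ \ge\ 8^{2^k}\ \ge\ 2^{2^k}.
\]
Applying Lemma~\ref{l:orlicz} to the pair $(F,\,\mathcal{E}\cap\Omega_k\cap F)$ then yields
\[
w(\mathcal{E}\cap\Omega_k\cap F)\ \le\ \frac{|F|\,\|w\|_{\varphi(L),F}}{\psi^{-1}(2^{2^k})},
\]
and summing over the disjoint family $\mathcal{F}_k$ gives control of $w(\mathcal{E}\cap\Omega_k)$ in terms of the Carleson-like quantity $\sum_{F\in\mathcal{F}_k} |F|\,\|w\|_{\varphi(L),F}$.

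The main obstacle is to then integrate $T_k f$ against $w$ on $\Omega_k$ and convert the resulting Carleson sum into $\int f\, M_{\varphi(L)}w$ with an absolute constant independent of $k$. A blunt use of $T_k f\le Tf\le 8$ on $\mathcal{E}$, combined with the naive estimates $|F|\le 4^{k+1}\int_F f$ (from $\langle f\rangle_F>4^{-k-1}$) and the pointwise majorization $\|w\|_{\varphi(L),F}\le M_{\varphi(L)}w(x)$ for $x\in F$, produces a parasitic factor $4^{k+1}$; this is fatal, because for the Young functions of interest the sum $\sum_k 4^k/\psi^{-1}(2^{2^k})$ already diverges in the $L(\log L)^\epsilon$ regime and a fortiori in the doubly logarithmic range of Corollary~\ref{c:borderline}. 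The subtle part of the proof must therefore exploit the specific structure of the sparse operator $T_k f$ jointly with the geometric decay of the depth-$(2^k+1)$ subcubes, pairing the factor $\langle f\rangle_Q\le 4^{-k}$ present in each term of $T_k f$ against the Orlicz ratio so that the $4^k$ loss is cancelled; this precise bookkeeping, which leverages the authors' "decomposition of the sparse operator based upon the function $f$" flagged in the introduction, is the heart of the argument.
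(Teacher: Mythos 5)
Your opening reduction is fine: splitting $\mathcal E$ at the level set $\Omega_k=\{T_kf>2^{-k}\}$ does give $\int_{\mathcal E\setminus\Omega_k}T_kf\,w\le 2^{-k}w(\mathcal E)$, and your observation that every $x\in\Omega_k$ lies in more than $2^k$ cubes of $\mathcal S_k$, hence $|\Omega_k\cap F|\le 8^{-2^k}|F|$ for the maximal cubes $F$, is correct and is the same sparsity iteration the paper uses. But the proposal stops exactly where the content of the lemma lies. As you yourself point out, estimating $\int_{\mathcal E\cap\Omega_k}T_kf\,w\le 8\,w(\mathcal E\cap\Omega_k)$ and then passing from $\sum_F|F|\,\|w\|_{\varphi(L),F}$ to $\int f\,M_{\varphi(L)}w$ via $|F|\le 4^{k+1}\int_Ff$ loses a factor $4^{k+1}$, which destroys the summability in $k$ for every Young function of interest; deferring the ``precise bookkeeping'' that should cancel this loss is not a completion of the proof, it is the proof. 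The defect of your scheme is structural: you harvest the $f$-mass only from the maximal cubes $F$, while the smallness $8^{-2^k}$ is measured at absolute depth inside $F$, so the averages $\langle f\rangle_F\approx 4^{-k}$ enter only through the crude lower bound $4^{-k-1}$ and the factor $4^{k}$ cannot be avoided from that position.

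What the paper does instead is a relative, layer-by-layer version of your idea. Decompose $\mathcal S_k=\bigcup_v\mathcal S_{k,v}$ into successive generations (maximal cubes, then maximal cubes of what remains, etc.; only $v\lesssim 4^{k+1}$ layers meet $\mathcal E$), and for $Q\in\mathcal S_{k,v}$ set $E_Q=Q\setminus\bigcup_{Q'\in\mathcal S_{k,v+1}}Q'$. These sets are pairwise disjoint over all of $\mathcal S_k$ and, by sparsity and the fact that parent and child have comparable averages, $\int_Qf\le\tfrac83\int_{E_Q}f$. With $u=2^k$, each $\mathcal E\cap Q$ is split into the ``deep'' part $Q_u$ (descendants $u$ generations below $Q$, with $|Q_u|\le 8^{-u}|Q|$) and the sets $E_{Q'}$ for $Q'$ in the next $u-1$ generations. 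The deep part is handled by applying Lemma~\ref{l:orlicz} at $Q$ itself, not at the maximal cube: $\langle f\rangle_Qw(\mathcal E\cap Q_u)=\int_Qf\cdot\langle w\mathbf 1_{\mathcal E\cap Q_u}\rangle_Q\lesssim \psi^{-1}(2^{2^k})^{-1}\int_{E_Q}f\,M_{\varphi(L)}w$, and summing over all $Q\in\mathcal S_k$ uses only the disjointness of the $E_Q$ — no factor $4^k$ appears because every cube contributes its own $f$-mass through $E_Q$. The shallow part is where the term $2^{-k}w(\mathcal E)$ actually comes from in the paper: each disjoint set $E_{Q'}$ is counted by at most $u=2^k$ ancestors $Q$, each carrying $\langle f\rangle_Q\le 4^{-k}$, giving $2^k\cdot 4^{-k}\,w(\mathcal E)=2^{-k}w(\mathcal E)$. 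This relative-depth decomposition, with the Orlicz lemma applied at every cube against its depth-$2^k$ descendants and paired with the disjoint sets $E_Q$, is the missing mechanism; without it your argument does not establish the lemma.
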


It is clear that this lemma completes the proof of our main theorem. We just write $Tf=\sum_{k=1}^\infty T_kf$, and from \eqref{e:4}: 
\begin{align*}
w (\mathcal E) &\leq \tfrac 14\sum _{k=1} ^{ \infty  } \left( 2 ^{-k}  w (\mathcal E) +\frac C {\psi^{-1} ( 2 ^{2 ^{k}}))} \int_{\mathbb R^d} f(x)\, M _{\varphi (L)} w(x) \; dx \right)
\\
& \leq \tfrac 12 w (\mathcal E) + C \cdot 
c_\varphi\int_{\mathbb R^d} f(x)\, M _{\varphi (L)} w(x) \; dx,
\end{align*}
which yields Theorem \ref{t:borderline}. Let us prove the lemma:

\begin{proof}
Write $ \mathcal S _k$ as the union of  $ \mathcal S _{k,v}$, for $ v=0,1 ,\dotsc,$, where $ \mathcal S _{k,0}$ 
are the maximal elements of $ \mathcal S_k$, and $ \mathcal S _{k, v+1} $ are the maximal elements of 
$ \mathcal S _{k} \setminus \bigcup _{\ell=0} ^{v}\mathcal S _{k,\ell}$.  We are free to assume that $ \mathcal S _{k,v} = \emptyset $ 
if $ v > 4 ^{k+1}$, since for a cube $Q$ in such a family, one would have that $\mathcal E\cap Q=\emptyset$.   Hence, we need to estimate 
\begin{equation} \label{e:Tw<}
\sum_{v=0} ^{4^{k+1}}  \sum_{Q\in \mathcal S _{k,v}}\langle f \rangle_Q w (\mathcal E \cap Q).  
\end{equation}

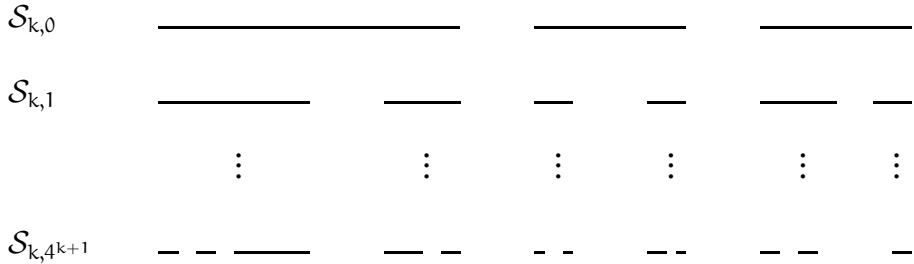
\begin{figure}[H]
\begin{center}
\setlength{\unitlength}{1cm}
\begin{picture}(10,5)
\thicklines

\put(-2,4){$\mathcal S_{k,0}$} 
\put(0,4){\line(1,0){4}}
\put(5,4){\line(1,0){2}}
\put(8,4){\line(1,0){2}}

\put(-2,3){$\mathcal S_{k,1}$} 
\put(0,3){\line(1,0){2}}
\put(3,3){\line(1,0){1}}
\put(5,3){\line(1,0){.5}}
\put(6.5,3){\line(1,0){.5}}
\put(8,3){\line(1,0){1}}
\put(9.5,3){\line(1,0){.5}}

\put(1,2){$\vdots$} 
\put(3.5,2){$\vdots$}
\put(5.25,2){$\vdots$}
\put(6.75,2){$\vdots$}
\put(8.5,2){$\vdots$}
\put(9.75,2){$\vdots$}

\put(-2,1){$\mathcal S_{k,4^{k+1}}$}  
\put(0,1){\line(1,0){.25}}
\put(.5,1){\line(1,0){.25}}
\put(1,1){\line(1,0){1}}
\put(3,1){\line(1,0){.5}}
\put(3.75,1){\line(1,0){.25}}
\put(5,1){\line(1,0){.125}}
\put(5.375,1){\line(1,0){.125}}
\put(6.5,1){\line(1,0){.25}}
\put(6.875,1){\line(1,0){.125}}
\put(8,1){\line(1,0){.25}}
\put(8.5,1){\line(1,0){.25}}
\put(9.75,1){\line(1,0){.25}}
\end{picture}
\end{center}

\vspace{-0.5cm}
\caption{Layer decomposition of $\mathcal S_k$.}
\end{figure}

Define
\begin{equation*}
 E_Q = Q \setminus \bigcup _{Q'\in \mathcal S _{k,v+1}}Q',  \quad\text{  for }  Q\in \mathcal S _{k,v}.
 \end{equation*} 
 These sets are disjoint in $ Q \in \mathcal S_k$ and using that $4^{-k-1}\leq \langle f\rangle_Q\leq 4^{-k}$ together with the sparsity condition \eqref{e:sparse}, one can check that
 \begin{align}\label{e:EQ}
\int_{Q}f\leq \frac{8}{3}\int_{E_Q}f.
 \end{align}
Set $ u=  2 ^{k} $.  
It follows again from sparsity that for each $ v \ge 0$, and $ Q\in \mathcal S _{k,v}$, 
\begin{equation}\label{e:spare-k} 
\lvert Q _{u} \rvert \leq 8 ^{-u} \lvert  Q\rvert, \qquad 
Q_u := \bigcup _{\substack{Q'\in \mathcal S _{k, v + u}\\ Q'\subset Q }} Q' .  
\end{equation}  
In \eqref{e:Tw<}, for each $ Q\in \mathcal S _{k,v}$ we decompose the set $ \mathcal E \cap Q$  into
\begin{equation*}
\mathcal E \cap Q =\mathcal  E \cap \Bigl(Q_u \cup \bigcup _{\ell =0} ^{u-1} 
\bigcup _{\substack{Q'\in \mathcal S _{k,v + \ell }\\ Q'\subset Q }} E_ {Q'} \Bigr).
\end{equation*}
Using estimates  \eqref{e:EQ}, \eqref{e:spare-k} and Lemma \ref{l:orlicz}, 
for $ Q\in \mathcal S _{k,v}$, 
\begin{align*}
\langle f \rangle_Q w (\mathcal E\cap Q_u)  & \lesssim  \int _{E_Q} f(x) \; dx  \,\langle w\mathbf 1_{\mathcal E\cap Q_u}\rangle_Q
\\
& \lesssim  \frac 1 {\psi^{-1} ( 2 ^{2 ^{k}})} \int_{E_Q} f(x) \,M _{\varphi (L)} w(x) \; dx. 
\end{align*}
This is the way in which $ M _{\varphi (L)} w$ is introduced. 
We sum this over $ v$ and $ Q\in \mathcal S _{k,v}$ to the main estimate, using the disjointness of the sets $ E_Q$:  
\begin{equation}\label{e:mainTerm}
\sum_{v=0} ^{4 ^{k+1}} \sum_{Q\in \mathcal S _{k,v}}  \langle f \rangle_Q w (\mathcal E \cap Q_u)   
\lesssim 
 \frac 1 {\psi^{-1} ( 2 ^{2 ^{k}})} \int_{\mathbb R ^{d}} f(x) \,M _{\varphi (L)} w(x) \; dx. 
\end{equation}
Now, the remaining sum is 
\begin{equation*}
\sum_{v=0} ^{4 ^{k+1}} \sum_{Q\in \mathcal S _{k,v}} 
\sum_{\ell =0} ^{ u-1}  \sum_{ \substack{Q'\in \mathcal S _{k,v+ \ell }\\ Q' \subset Q }}
\langle f \rangle_Q  w (\mathcal E  \cap E _{Q'})    
\leq 4^{-k} u \sum_{v=0}^{4^{k+1}}\sum_{Q\in \mathcal S_{k,v}}w(\mathcal E\cap E_{Q})
 \leq 2 ^{-k} w (\mathcal E).  
 \end{equation*}
This only depends on the fact that $ \langle f \rangle_Q\leq 4^{-k}$ and 
the disjointness of the sets $ E _{Q'}$. The proof of the lemma is now complete.

\end{proof}

Finally, let us check that from here we can deduce both Theorem \ref{t:HP} and Corollary \ref{c:borderline}. We have three functions $\varphi$ that we want to study:
\begin{description}
\item[1]$\varphi (t) = t(\log_1 t) ^{\epsilon }$, with $0<\epsilon<1$,
\item[2]  $\varphi (t) = t(\log_2 t) ^{\alpha }$, with $1<\alpha<2$,
\item[3] $\varphi (t) = t\log_2  t(\log_3 t) ^{\alpha }$, with $ 1<\alpha<2$.
\end{description} 
In these cases, $\varphi(t)=tL(t)$, where $L$ is the corresponding logarithmic part, and it holds that $L(t)\lesssim \psi^{-1}(t)$. Since $\psi^{-1}$ is increasing, it suffices to check that
$$
\psi(L(t))=\sup_{0<s<t}\left\{s(L(t)-L(s))\right\}\lesssim t.
$$
This follows by the mean-value theorem and the fact that $rL'(r)\leq C$, with $C>1$ being a universal constant. 
Therefore, we can work with $c_\varphi$ with $\psi^{-1}$ replaced by $L$.

\medskip 

\smallskip 
\textbf{Case 1.}  This corresponds to the inequality \eqref{e:eps}. 
From \eqref{e:suff} and taking $L(t)=(\log_1 t)^\epsilon$ instead of $\psi^{-1}$,
\begin{align*}
c _{\varphi } \simeq \sum_{k=1 }^\infty\frac{ 1}{2^{k\epsilon}}
& \simeq   \frac 1 \epsilon . 
\end{align*}

\smallskip 
\textbf{Case 2.} This corresponds to the inequality \eqref{e:ii}.  
From \eqref{e:suff} and taking $L(t)=(\log_2 t)^\alpha$ instead of $\psi^{-1}$, 
\begin{align*}
c_\varphi\lesssim \sum_{k=1 }^\infty    \frac {1} {  k ^{\alpha }}    \lesssim \frac 1 {\alpha -1} . 
\end{align*}

\smallskip 
\textbf{Case 3.}  This corresponds to the inequality \eqref{e:iii}. 
From \eqref{e:suff} and taking $L(t)=\log_2t(\log_3 t)^\alpha$ instead of $\psi^{-1}$, 
\begin{align*}
 c_\varphi\lesssim\sum_{k=1 }^\infty  \frac {1 } {  k  (\log k) ^{\alpha}} \lesssim \frac 1 {\alpha  -1} .
\end{align*}

\medskip 
This proves Theorem \ref{t:HP} and Corollary \ref{c:borderline}. We also mention that if we apply our theorem to the case 
$\varphi(t)=t^r$, then $\psi^{-1}(t)\simeq t^{1/r'}$ and the constant that we obtain for $M_{\varphi(L)}=M_r$ is exactly $\log_1 r'$, as  in \eqref{e:Mr}. 

This last estimate \eqref{e:Mr}, combined with the reverse H\"older estimate \eqref{e:srh}, proves the estimate \eqref{e:logA}. 
We remark that an alternate proof of this result can be had by straight forward modification of the argument 
in the next section. Details are left to the reader.

\section{Square Functions: Proof of Theorem~\ref{t:sq}} 

Recall that it suffices to prove the weak-type bound for the sparse square function $ S f$ defined in \eqref{e:Sdef}, 
in which we can assume $ f$ is non-negative.  
The case $1\leq p<2$ is easy and contained in \cite{12114219}, as so our attention is on the case of 
 $p\geq 2$. The sparse collection of cubes $ \mathcal S$ is divided according to the approximate size of the average of $ f$.  
 For every integer $m$, define $\mathcal{S}_m$ by
\begin{equation*}
    \mathcal{S}_m = \{Q \in \mathcal{S}: \, 2^{-m-1} < \langle f \rangle_Q \leq 2^{-m} \}. 
\end{equation*}
The exceptional set  for $Q\in \mathcal S_m$ is defined relative to $ \mathcal S_m$ by 
\begin{equation*}
    E_m(Q) = Q \setminus \bigcup_{Q' \subsetneq Q,\, Q' \in \mathcal{S}_m} Q'.
\end{equation*}
By sparsity, we have that
 \begin{equation} \label{GoodControlEq}
        \langle f \mathbf{1}_{E_m(Q)} \rangle_Q \sim \langle f \rangle_Q.
    \end{equation}
Also, we set $S_m$ to be the square function associated with $\mathcal S_m$
\[
(S_m f) ^2 :=\sum_{Q\in \mathcal S_m} \langle f\rangle_Q^2\mathbf{1}_Q . 
\]
Thus, trivially,  
\[
(Sf) ^2 := \sum_{Q\in \mathcal S} \langle f\rangle_Q^2\mathbf{1}_Q=\sum_{m\in \mathbb Z} (S_m f)^2.
\]

For the case of moderate $ m$, namely $ 0 < m \leq C \log_1 [w] _{A_\infty}$, we have this estimate. 

\begin{lemma} For $p\geq 2$,
    \begin{equation}\label{Ap_bound}
        \| S_m f \|_{L^p(w)} \lesssim
        [w]_{A_p}^{1/2} \|f\|_{L^p(w)}
    \end{equation}
\end{lemma}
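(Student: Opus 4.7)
The plan is to dualize the $L^p(w)$-norm (squared) through $L^{p/2}(w)$. Since $p/2\geq 1$, one has
$$
\|S_m f\|_{L^p(w)}^2 \;=\; \sup_{h} \sum_{Q \in \mathcal{S}_m} \langle f \rangle_Q^2 \int_Q h w\,dx,
$$
where the supremum is over non-negative $h$ with $\|h\|_{L^{(p/2)'}(w)}\leq 1$ (and one may take $h\equiv 1$ when $p=2$). The target becomes showing that this sum is bounded by $[w]_{A_p}\|f\|_{L^p(w)}^2$ up to a dimensional constant.

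To estimate each $\langle f\rangle_Q$, I would apply \eqref{GoodControlEq} to replace it by the comparable quantity $\langle f\mathbf{1}_{E_m(Q)}\rangle_Q$, and then H\"older's inequality on $E_m(Q)$ with the dual weight $\sigma:=w^{1-p'}$, giving
$$
\langle f \rangle_Q^2 \;\lesssim\; \frac{\sigma(Q)^{2/p'}}{|Q|^2}\Bigl(\int_{E_m(Q)} f^p\,w\,dx\Bigr)^{2/p}.
$$
Writing $\int_Q hw = w(Q)\langle h\rangle_Q^w$ with $\langle h\rangle_Q^w:=w(Q)^{-1}\int_Q hw$, and invoking the $A_p$ characteristic in the form $\sigma(Q)^{p-1}w(Q)\leq[w]_{A_p}|Q|^p$, the prefactor becomes $[w]_{A_p}^{2/p}\,w(Q)^{(p-2)/p}\langle h\rangle_Q^w$. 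An application of H\"older's inequality to the $Q$-sum with exponents $p/2$ and $(p/2)'$ then splits the problem: the $f$-side telescopes into $\|f\|_{L^p(w)}^2$ via the pairwise disjointness of $\{E_m(Q)\}_{Q\in\mathcal{S}_m}$, while the $h$-side (using $(p-2)/p\cdot(p/2)'=1$) reduces to
$$
\Bigl(\sum_{Q\in\mathcal{S}_m} w(Q)\,\bigl(\langle h\rangle_Q^w\bigr)^{(p/2)'}\Bigr)^{1/(p/2)'}.
$$

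The main obstacle is controlling this last sum, because the cubes in $\mathcal{S}_m$ overlap, so the pointwise bound $\langle h\rangle_Q^w\leq M_w^{d} h(x)$ for $x\in Q$ cannot be exploited directly. The plan is to invoke a weighted Carleson embedding with $A_\infty$ constants: sparsity of $\mathcal{S}_m$ together with the sharp reverse H\"older inequality \eqref{e:srh} ensures that $\{w(Q)\}_{Q\in \mathcal{S}_m}$ is a Carleson sequence with respect to $w$-measure, with Carleson constant $\lesssim [w]_{A_\infty}$. Combined with the $L^{(p/2)'}(w)$-boundedness of the dyadic weighted maximal $M_w^{d}$, this yields $\sum_Q w(Q)\bigl(\langle h\rangle_Q^w\bigr)^{(p/2)'}\lesssim [w]_{A_\infty}\|h\|_{L^{(p/2)'}(w)}^{(p/2)'}\leq [w]_{A_\infty}$. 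Assembling the estimates,
$$
\|S_m f\|_{L^p(w)}^2\lesssim [w]_{A_p}^{2/p}\,[w]_{A_\infty}^{(p-2)/p}\,\|f\|_{L^p(w)}^2;
$$
since $[w]_{A_\infty}\lesssim[w]_{A_p}$ and $2/p+(p-2)/p=1$, the desired $[w]_{A_p}^{1/2}$ bound follows.
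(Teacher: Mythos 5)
Your argument is correct, but it runs along a genuinely different track than the paper's. The paper proves the lemma only at the critical exponent $p=2$ --- a short $A_2$ computation inserting $1=(w\sigma)^{1/2}$ and using the disjointness of the sets $E_m(Q)$ --- and then obtains $2<p<\infty$ by sharp Rubio de Francia extrapolation (Duoandikoetxea), which converts the $[w]_{A_2}^{1/2}$ bound into $[w]_{A_p}^{1/2}$ for $p\geq 2$. You instead treat all $p\geq 2$ at once: dualizing $(S_mf)^2$ through $L^{p/2}(w)$, using \eqref{GoodControlEq}, H\"older with $\sigma=w^{1-p'}$ on the disjoint sets $E_m(Q)$, and the $A_p$ condition, you reduce matters to the Carleson-type sum $\sum_{Q\in\mathcal S_m} w(Q)\,(\langle h\rangle_Q^w)^{(p/2)'}$, which you control by the standard weighted Carleson embedding once you know $\sum_{Q\in\mathcal S_m,\,Q\subseteq R} w(Q)\lesssim [w]_{A_\infty}\,w(R)$. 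That sparse Carleson property is indeed true with constant $\lesssim[w]_{A_\infty}$; note that it follows even more directly from the Fujii--Wilson definition of $[w]_{A_\infty}$ used in the paper, since $w(Q)\leq \tfrac{8}{7}\int_{E_m(Q)}M(w\mathbf 1_R)\,dx$ and the $E_m(Q)$ are disjoint --- the reverse H\"older route you cite also works but needs a layer-by-layer decay argument, not just the naive ``$w(E_m(Q))\gtrsim w(Q)$'' (which fails quantitatively). Your route avoids the extrapolation machinery and in fact yields the formally sharper mixed bound $\|S_mf\|_{L^p(w)}\lesssim [w]_{A_p}^{1/p}[w]_{A_\infty}^{\frac12-\frac1p}\|f\|_{L^p(w)}$, which dominates the claimed $[w]_{A_p}^{1/2}$ bound since $[w]_{A_\infty}\lesssim[w]_{A_p}$; the paper's route is shorter at $p=2$ and outsources the $p$-dependence to a black-box extrapolation theorem. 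The degenerate case $p=2$ (where $(p/2)'=\infty$, $h\equiv1$) is handled correctly in your scheme and coincides with the paper's computation.
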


Here and below we consider the case of $ p\geq 2$.  The critical case is $ p=2$, but 
if we were to focus on this case, one would then have to appeal to an $ A_ \infty $ extrapolation 
argument for square functions, for which we do not have a clear cut reference. 
It is easier to simply prove the estimate for all $ 2\leq p < \infty $.

\begin{proof}
Rubio de Francia's extrapolation, as formulated in  \cite{MR2754896}*{Thm 3.1}, shows that the case of $ p=2$ 
implies the case of $ 2< p < \infty $.  
The case of $ p=2$ is a standard calculation.  We will use the dual weight $ \sigma = w ^{-1}$, and the 
standard trick of inserting $ 1 = [ w \cdot \sigma ] ^{1/2} $ inside a square.  
  By \eqref{GoodControlEq}, we can estimate 
  \begin{align*}
\sum_{Q \in \mathcal{S}_m} \langle f \rangle_Q^2   w (Q) & \lesssim 
  \sum_{Q \in \mathcal{S}_m} \langle f \mathbf{1}_{E_m(Q)} \rangle_Q^2 w(Q)
  \\
  & \leq  \sum_{Q \in \mathcal{S}_m} \langle f^2 \mathbf{1}_{E(Q)} w \rangle_Q \langle \sigma \rangle_Q w(Q) 
  \\
  &= \int f^2 \Bigl( \sum_{Q \in \mathcal{S}_m} \mathbf{1}_{E_m(Q)} \langle w \rangle_Q \langle \sigma \rangle_Q \Bigr) w 
  \leq [w]_{A_2} \int f^2 w. 
\end{align*}
\end{proof}

For large values of $ m$, this estimate is relevant. 
\begin{lemma}\label{Ainfty} For all integers $ m_0 > 0$, 
    \begin{equation}
     w\Bigl( \sum_{m=m_0}^\infty (S_m f)^2 > 1 \Bigr) \lesssim [w]_{A_p} \Bigl( \frac{[w]_{A_\infty}}{2^{m_0}} \Bigr)^p \|f\|_{L^p(w)}^p.
    \end{equation}
\end{lemma}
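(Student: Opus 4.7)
The strategy is to compress the sum over $m$ into a \emph{single} linear sparse-operator event, and then apply Chebyshev's inequality together with a weighted $L^p$ bound.

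\emph{Step 1 (pointwise linearization).} The crucial observation is that for every $Q\in\mathcal{S}_m$ we have $\langle f\rangle_Q\le 2^{-m}\le 2^{-m_0}$ whenever $m\ge m_0$. Writing $\mathcal{S}':=\bigcup_{m\ge m_0}\mathcal{S}_m$ and noting that each $Q\in\mathcal{S}'$ belongs to exactly one $\mathcal{S}_m$, we obtain the elementary bound
\begin{equation*}
\sum_{m\ge m_0}(S_m f)^2 \;=\; \sum_{Q\in\mathcal{S}'}\langle f\rangle_Q^2\mathbf{1}_Q \;\le\; 2^{-m_0}\sum_{Q\in\mathcal{S}'}\langle f\rangle_Q\mathbf{1}_Q \;=:\; 2^{-m_0}\,T_{\mathcal{S}'}f,
\end{equation*}
where $T_{\mathcal{S}'}$ is the linear sparse operator associated with $\mathcal{S}'$. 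Consequently,
\begin{equation*}
\Bigl\{\textstyle\sum_{m\ge m_0}(S_m f)^2 > 1\Bigr\}\;\subseteq\;\{T_{\mathcal{S}'}f > 2^{m_0}\}.
\end{equation*}

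\emph{Step 2 (Chebyshev in $L^p(w)$).} Applying Markov's inequality,
\begin{equation*}
w\bigl\{T_{\mathcal{S}'}f > 2^{m_0}\bigr\} \;\le\; 2^{-m_0 p}\,\|T_{\mathcal{S}'}f\|_{L^p(w)}^{p}.
\end{equation*}

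\emph{Step 3 (weighted sparse estimate).} For $p\ge 2$, invoke the sharp mixed $A_p$--$A_\infty$ $L^p(w)$ bound for linear sparse operators, which follows from the sharp reverse Hölder inequality (Theorem~\ref{t:srh}) and a standard testing/duality argument. In the form needed here,
\begin{equation*}
\|T_{\mathcal{S}'}f\|_{L^p(w)}^{p}\;\lesssim\;[w]_{A_p}\,[w]_{A_\infty}^{p}\,\|f\|_{L^p(w)}^{p}.
\end{equation*}
Combining this with Step~2 delivers the lemma.

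\emph{Main obstacle.} The only genuinely nontrivial ingredient is Step~3: extracting the precise constant $[w]_{A_p}\,[w]_{A_\infty}^{p}$ in the $L^p(w)$ bound for the sparse operator. The sharp reverse Hölder inequality of Theorem~\ref{t:srh} is the key tool, allowing one to trade a factor of $[w]_{A_\infty}$ for an improvement in the $A_p$ exponent. Step~1, which is where the restriction $m\ge m_0$ is used, and Step~2 are routine once this ingredient is available; the $2^{-m_0 p}$ gain essentially comes for free from the smallness of $\langle f\rangle_Q$ on $\mathcal{S}'$.
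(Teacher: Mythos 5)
Your Step 3 is where the argument breaks, and it is not a repairable citation gap: the asserted inequality $\|T_{\mathcal S'}f\|_{L^p(w)}\lesssim [w]_{A_p}^{1/p}[w]_{A_\infty}\|f\|_{L^p(w)}$ is false as a bound for linear sparse operators. The sharp mixed estimate is $\|T_{\mathcal S}\|_{L^p(w)}\lesssim [w]_{A_p}^{1/p}\bigl([w]_{A_\infty}^{1/p'}+[\sigma]_{A_\infty}^{1/p}\bigr)$ with $\sigma=w^{1-p'}$, and the dual term $[\sigma]_{A_\infty}^{1/p}$ cannot be absorbed into powers of $[w]_{A_\infty}$. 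Concretely, at $p=2$ take $w(x)=|x|^{1-\epsilon}$ on $\mathbb R$: then $[w]_{A_2}\approx \epsilon^{-1}$ while $[w]_{A_\infty}\lesssim 1$, yet a sparse operator adapted to $f(x)=|x|^{-1+\epsilon}\mathbf 1_{(0,1)}$ (e.g.\ built on the intervals $(0,2^{-k})$, which dominate $Mf$) has $L^2(w)$ operator norm $\gtrsim\epsilon^{-1}$, whereas your claimed bound would give $[w]_{A_2}^{1/2}[w]_{A_\infty}\approx\epsilon^{-1/2}$. Moreover, even substituting the correct general bound $\|T_{\mathcal S'}\|_{L^p(w)}\lesssim[w]_{A_p}$ into your Steps 1--2 only yields $2^{-m_0p}[w]_{A_p}^p\|f\|_{L^p(w)}^p$, which is strictly weaker than the lemma's $[w]_{A_p}\bigl([w]_{A_\infty}/2^{m_0}\bigr)^p\|f\|_{L^p(w)}^p$; and that weaker form would later give $(\log_1[w]_{A_p})^{1/2}$ instead of $(\log_1[w]_{A_\infty})^{1/2}$ in Theorem~\ref{t:sq}, so the loss is fatal to the sharpness the lemma is designed to deliver.

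The reason your linearization cannot work is that it discards the mechanism that produces the $A_\infty$-only dependence: after writing $(S_mf)^2=2^{-2m}b_m$ with $b_m\le\sum_{Q\in\mathcal S_m}\mathbf 1_Q$, sparsity gives a John--Nirenberg-type exponential bound $|\{b_m>2^{m_0+m-1}\}\cap Q|\lesssim e^{-C2^{m_0+m}}|Q|$ inside each maximal cube $Q\in\mathcal S_m^{\ast}$; the sharp reverse H\"older inequality (Theorem~\ref{t:srh}) converts this Lebesgue smallness into $w$-smallness at the price of only $[w]_{A_\infty}$ in the exponent, and the single factor $[w]_{A_p}$ enters solely through the weak-type maximal bound $w(B_m)\lesssim 2^{mp}[w]_{A_p}\|f\|_{L^p(w)}^p$. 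Summing $2^{mp}\exp(-c\,2^{m_0+m}/[w]_{A_\infty})$ over $m\ge m_0$ produces the factor $([w]_{A_\infty}/2^{m_0})^p$. Your Step 1 keeps only one power of $2^{-m_0}$ and replaces the exponential distributional estimate by a norm inequality, which is precisely where the unobtainable factor $[w]_{A_\infty}^p$ (in place of $[w]_{A_p}^{p-1}$) gets asserted without proof. To fix the argument you would need to reinstate the level-set/exponential-integrability estimate for the counting functions $b_m$ together with the reverse H\"older transfer, i.e.\ essentially the paper's proof.
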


\begin{proof}
Write $ (S_m f) ^2 $ as  $ 2 ^{-2m} b_m$, where 
\begin{equation*}
b_m \leq\sum_{Q\in \mathcal S_m}\mathbf 1_{Q} 
\end{equation*}
and $ b_m$ are supported on the set $ B_m = \bigcup \{Q \;:\; Q\in \mathcal S_m ^{\ast} \}$. 
Here $ \mathcal S_m^{\ast}$ are the maximal cubes in $ \mathcal S_m$.  
On each cube $ Q\in \mathcal S_m$, the function $ b_m$ is locally exponentially integrable, by sparsity.  
By  the  sharp weak-type estimate for the maximal function, $ w (B_m) \lesssim 2 ^{pm} [ w] _{A_p} \lVert f\rVert _{L ^{p} (w)}^p$. 

We then estimate 
\begin{align*}
        w\Bigl( \sum_{m=m_0}^\infty (S_m f)^2 > 1 \Bigr) &= w\Bigl( \sum_{m=m_0}^\infty  2 ^{-2m} b_m > \sum_{m=m_0}^\infty 2^{m_0-m-1} \Bigr) \\
&\leq \sum_{m=m_0}^\infty  w(  b_m > 2^{m_0+m-1} ).
\end{align*}

We have a very good Lebesgue measure estimate for the \emph{Lebesgue measure} of the sets above. 
By sparsity, 
$
  \lvert  \{  b_m > 2^{m_0+m-1} \}\rvert \lesssim \exp( -C 2^{m_0+m} ) \lvert  B_m\rvert 
$.  Indeed, this estimate is uniform over the  cubes $Q\in \mathcal S_m ^{\ast}  $:  
Setting $ \beta (Q) :=  \{x \in Q \;:\;    b_m(x) > 2^{m_0+m-1} \}$, we have 
\begin{equation*}
  \lvert \beta (Q)\rvert \lesssim \exp( -C 2^{m_0+m} ) \lvert  Q\rvert 
\end{equation*}
This is converted to $ w$-measure, using the $A_\infty$ property of $A_p$ weights, together with the sharp reverse-H\"older estimate.  With $ r (w)$ as in \eqref{e:srh}, there holds 
    \begin{align*}
     \langle  w \mathbf 1_{ \beta (Q)} \rangle_Q 
     & \leq   \langle \mathbf 1_{\beta (Q)} \rangle ^{1/r (w)'} _{Q} \langle  w ^{r (w)} \rangle_Q ^{r (w)} 
    \\ & \lesssim 
        \Biggl[
        \frac { \lvert  \beta (Q)\rvert  }  { \lvert  Q\rvert }
        \Biggr]^{ (C[w]_{A_\infty})^{-1}}w(Q)
\lesssim w(Q) \exp\Bigl( - c \frac{2^{m_0+m} }{[w]_{A_\infty}} \Bigr). 
    \end{align*}
    Summing over the disjoint cubes in $ \mathcal S _{m} ^{\ast} $, we get
    \[
        w\Bigl( \sum_{m=m_0}^\infty (S_m f)^2 > 1 \Bigr) \lesssim [w]_{A_p} \|f\|_{L^p(w)}^p \sum_{m = m_0}^\infty 2^{mp} \exp\Bigl( -c \frac{2^{m_0+m} }{[w]_{A_\infty}} \Bigr).
    \]
The last sum is approximated by an integral to finish the proof.  
    \begin{align*}
        \sum_{m = m_0}^\infty 2^{mp} \exp\Bigl( -C \frac{2^{m_0+m} }{[w]_{A_\infty}} \Bigr) &\leq \int_{m_0}^\infty 2^{xp} \exp\Bigl( -c \frac{2^{m_0+x} }{[w]_{A_\infty}} \Bigr) \, dx 
        \\
        &\approx \int_{2^{m_0}}^\infty y^p \exp\Bigl( -c\frac{2 ^{m_0}}{[w]_{A_\infty}} y \Bigr) \, \frac{dy}{y} 
        \\
        &= \Bigl( \frac{[w]_{A_\infty}}{2^{m_0}} \Bigr)^p \int_{\frac{2^{2m_0}}{[w]_{A_\infty}}}^\infty y^p e^{-y} \, \frac{dx}{y} 
        \lesssim \Bigl( \frac{[w]_{A_\infty}}{2^{m_0}} \Bigr)^p. 
    \end{align*}
\end{proof}

The Lemmas are finished, and we can turn to the Theorem. 
Now, it suffices to estimate $ w ( S f >  \lambda )$, but it suffices to assume that $ \lambda =2$, and $ \lVert f\rVert _{L ^{p} (w)}=1$.  After division of the sparse collection $ \mathcal S$ into the subcollections $ \mathcal S _{m}$, for $ m\in \mathbb Z $, 
estimate for $ m_0  \approx \log_1 [w] _{A_ \infty }$,  
\[
    w((Sf)^2>2)\leq w (M f >1) + 
    w\Bigl( \sum_{m=1}^{m_0-1} (S_m f)^2  > 1 \Bigr)
    + w\Bigl( \sum_{m=m_0}^\infty (S_m f)^2  > 1 \Bigr).
\]
The first term is controlled by the sharp weak-type estimate for the maximal function, which yields an estimate 
smaller than what we claim for $ S f$. 
The second term is estimated by Chebysheff,   Minkowski's inequality as $ p\geq 2$, and the norm estimate from Lemma~\ref{Ap_bound}.  
\begin{align*}
    w\Bigl( \sum_{m=0}^{m_0-1} (S_m f)^2 > 1 \Bigr)
    &\leq \Bigl\|\sum_{m=0}^{m_0-1} (S_m f)^2 \Bigr\|_{L^{p/2}(w)}^{p/2} \\
    &\leq \Bigl( \sum_{m=0}^{m_0-1} \|(S_m f)^2 \|_{L^{p/2}(w)} \Bigr)^{p/2} \\
    &= \Bigl( \sum_{m=0}^{m_0-1} \|S_m f\|_{L^p(w)}^2 \Bigr)^{p/2}\leq \bigl( m_0 [w]_{A_p} \bigr)^{p/2}. 
\end{align*}
For the third term we can just use the estimate from Lemma \ref{Ainfty}:
\[
    w\Bigl( \sum_{m=m_0}^\infty (S_m f)^2 > 1 \Bigr) \leq [w]_{A_p} \Bigl( \frac{[w]_{A_\infty}}{2^{m_0}} \Bigr)^p.
\]

Combining these  estimates we get
\[
    \|Sf\|_{L^{p,\infty}(w)} \lesssim m_0^{\frac{1}{2}} [w]_{A_p}^{\frac{1}{2}} + [w]_{A_p}^{\frac{1}{p}} [w]_{A_\infty} 2^{-m_0} 
    \approx \bigl[[w]_{A_p} \log_1 [w] _{A_ \infty } \bigr] ^{\frac{1}{2}} 
\]
since  $m_0 \approx \log_1[w]_{A_\infty}$.

\begin{bibsection}  
\begin{biblist}

\bib{MR0133653}{article}{
   author={Benedek, A.},
   author={Calder{\'o}n, A. P.},
   author={Panzone, R.},
   title={Convolution operators on Banach space valued functions},
   journal={Proc. Nat. Acad. Sci. U.S.A.},
   volume={48},
   date={1962},
   pages={356--365},
   issn={0027-8424},
   review={\MR{0133653 (24 \#A3479)}},
}

\bib{MR0358205}{article}{
   author={Coifman, R. R.},
   author={Fefferman, C.},
   title={Weighted norm inequalities for maximal functions and singular
   integrals},
   journal={Studia Math.},
   volume={51},
   date={1974},
   pages={241--250},
   issn={0039-3223},
   review={\MR{0358205 (50 \#10670)}},
}


\bib{13125255}{article}{
   author = {{Criado}, A.},
   author={{Soria}, F.},
    title ={Muckenhoupt-Wheeden conjectures in higher dimensions},
   eprint = {1312.5255},
     year = {2013},
}

\bib{MR2797562}{book}{
   author={Cruz-Uribe, David V.},
   author={Martell, Jos{\'e} Maria},
   author={P{\'e}rez, Carlos},
   title={Weights, extrapolation and the theory of Rubio de Francia},
   series={Operator Theory: Advances and Applications},
   volume={215},
   publisher={Birkh\"auser/Springer Basel AG, Basel},
   date={2011},
   pages={xiv+280},
   isbn={978-3-0348-0071-6},
   review={\MR{2797562 (2012f:42001)}},
   doi={10.1007/978-3-0348-0072-3},
}

\bib{MR2754896}{article}{
   author={Duoandikoetxea, Javier},
   title={Extrapolation of weights revisited: new proofs and sharp bounds},
   journal={J. Funct. Anal.},
   volume={260},
   date={2011},
   number={6},
   pages={1886--1901},
   issn={0022-1236},
   review={\MR{2754896 (2012e:42027)}},
   doi={10.1016/j.jfa.2010.12.015},
}

\bib{MR1439553}{article}{
   author={Fefferman, R.},
   author={Pipher, J.},
   title={Multiparameter operators and sharp weighted inequalities},
   journal={Amer. J. Math.},
   volume={119},
   date={1997},
   number={2},
   pages={337--369},
   issn={0002-9327},
   review={\MR{1439553 (98b:42027)}},
}

\bib{MR0284802}{article}{
   author={Fefferman, C.},
   author={Stein, E. M.},
   title={Some maximal inequalities},
   journal={Amer. J. Math.},
   volume={93},
   date={1971},
   pages={107--115},
   issn={0002-9327},
   review={\MR{0284802 (44 \#2026)}},
}
		
\bib{MR3092729}{article}{
   author={Hyt{\"o}nen, T.},
   author={P{\'e}rez, C.},
   title={Sharp weighted bounds involving $A_\infty$},
   journal={Anal. PDE},
   volume={6},
   date={2013},
   number={4},
   pages={777--818},
   issn={2157-5045},
   review={\MR{3092729}},
   doi={10.2140/apde.2013.6.777},
}

\bib{MR3327006}{article}{
   author={Hyt{\"o}nen, T.},
   author={P{\'e}rez, C.},
   title={The $L(\log L)^\epsilon$ endpoint estimate for maximal singular
   integral operators},
   journal={J. Math. Anal. Appl.},
   volume={428},
   date={2015},
   number={1},
   pages={605--626},
   issn={0022-247X},
   review={\MR{3327006}},
   doi={10.1016/j.jmaa.2015.03.017},
}

\bib{orlicz}{book}{
  author={Krasnosel'skii, M. A.},
  author={Rutickii, Y. B.},
  title={Convex Functions and Orlicz Spaces},
  date={1961} 
  publisher={P. Noordhoff Ltd.}
  address={Groningen}
}

\bib{150105818}{article}{
  author={Lacey, M. T.},
  title={An elementary proof of the $A_2$ Bound},
  date={2015},
  eprint={http://arxiv.org/abs/1501.05818},
  journal={Israel J. Math., to appear}, 
}

\bib{12114219}{article}{
  author={Lacey, M. T.},
  author={Scurry, J.}
  title={Weighted Weak Type Estimates for Square Functions},
  date={2012},
  eprint={http://arxiv.org/abs/1211.4219},
}

\bib{MR2770437}{article}{
   author={Lerner, Andrei K.},
   title={Sharp weighted norm inequalities for Littlewood-Paley operators
   and singular integrals},
   journal={Adv. Math.},
   volume={226},
   date={2011},
   number={5},
   pages={3912--3926},
   issn={0001-8708},
   review={\MR{2770437 (2012c:42048)}},
   doi={10.1016/j.aim.2010.11.009},
}

\bib{MR2427454}{article}{
   author={Lerner, A. K.},
   author={Ombrosi, S.},
   author={P{\'e}rez, C.},
   title={Sharp $A\sb 1$ bounds for Calder\'on-Zygmund operators and the
   relationship with a problem of Muckenhoupt and Wheeden},
   journal={Int. Math. Res. Not. IMRN},
   date={2008},
   number={6},
   pages={Art. ID rnm161, 11},
   issn={1073-7928},
   review={\MR{2427454 (2009e:42030)}},
   doi={10.1093/imrn/rnm161},
}

\bib{MR2480568}{article}{
   author={Lerner, A. K.},
   author={Ombrosi, S.},
   author={P{\'e}rez, C.},
   title={$A\sb 1$ bounds for Calder\'on-Zygmund operators related to a
   problem of Muckenhoupt and Wheeden},
   journal={Math. Res. Lett.},
   volume={16},
   date={2009},
   number={1},
   pages={149--156},
   issn={1073-2780},
   review={\MR{2480568 (2010a:42052)}},
   doi={10.4310/MRL.2009.v16.n1.a14},
}

\bib{problems}{article}{
   author={Muckenhoupt, Benjamin},
   title={Problems},
   conference={
      title={Harmonic analysis},
   },
   book={
      series={Contemp. Math.},
      volume={411},
      publisher={Amer. Math. Soc., Providence, RI},
   },
   date={2006},
   pages={131-135},
}

\bib{MR1260114}{article}{
   author={P{\'e}rez, C.},
   title={Weighted norm inequalities for singular integral operators},
   journal={J. London Math. Soc. (2)},
   volume={49},
   date={1994},
   number={2},
   pages={296--308},
   issn={0024-6107},
   review={\MR{1260114 (94m:42037)}},
   doi={10.1112/jlms/49.2.296},
}

\bib{MR2799801}{article}{
   author={Reguera, M. C.},
   title={On Muckenhoupt-Wheeden conjecture},
   journal={Adv. Math.},
   volume={227},
   date={2011},
   number={4},
   pages={1436--1450},
   issn={0001-8708},
   review={\MR{2799801 (2012d:42038)}},
   doi={10.1016/j.aim.2011.03.009},
}

\bib{MR2923171}{article}{
   author={Reguera, M. C.},
   author={Thiele, C.},
   title={The Hilbert transform does not map $L\sp 1(Mw)$ to $L\sp
   {1,\infty}(w)$},
   journal={Math. Res. Lett.},
   volume={19},
   date={2012},
   number={1},
   pages={1--7},
   issn={1073-2780},
   review={\MR{2923171}},
   doi={10.4310/MRL.2012.v19.n1.a1},
}

\bib{MR663793}{article}{
   author={Rubio de Francia, Jos{\'e} Luis},
   title={Factorization and extrapolation of weights},
   journal={Bull. Amer. Math. Soc. (N.S.)},
   volume={7},
   date={1982},
   number={2},
   pages={393--395},
   issn={0273-0979},
   review={\MR{663793 (83i:42016)}},
   doi={10.1090/S0273-0979-1982-15047-9},
}

\bib{MR745140}{article}{
   author={Rubio de Francia, Jos{\'e} L.},
   title={Factorization theory and $A\sb{p}$ weights},
   journal={Amer. J. Math.},
   volume={106},
   date={1984},
   number={3},
   pages={533--547},
   issn={0002-9327},
   review={\MR{745140 (86a:47028a)}},
   doi={10.2307/2374284},
}

\bib{MR736248}{article}{
   author={Rubio de Francia, J. L.},
   author={Ruiz, F. J.},
   author={Torrea, J. L.},
   title={Les op\'erateurs de Calder\'on-Zygmund vectoriels},
   language={French, with English summary},
   journal={C. R. Acad. Sci. Paris S\'er. I Math.},
   volume={297},
   date={1983},
   number={8},
   pages={477--480},
   issn={0249-6291},
   review={\MR{736248 (85h:42024)}},
}

\bib{wilson1}{article}{
   author={Wilson, Michael},
   title={The intrinsic square function},
   journal={Rev. Mat. Iberoam.},
   volume={23},
   date={2007},
   number={3},
   pages={771--791},
   issn={0213-2230},
   review={\MR{2414491 (2009e:42039)}},
   doi={10.4171/RMI/512},
}


\end{biblist}
\end{bibsection}

\end{document}